\newtheorem{thm}{Theorem}[section]
\newtheorem{cor}[thm]{Corollary}
\newtheorem{lem}[thm]{Lemma}
\newtheorem{prop}[thm]{Proposition}
\newtheorem{defn}[thm]{Definition}
\newtheorem{rem}[thm]{Remark}
\newtheorem{ex}[thm]{Example}
\def\cocoa{{\hbox{\rm C\kern-.13em o\kern-.07em C\kern-.13em o\kern-.15em A}}}
\def\sqr#1#2{{\vcenter{\hrule height.#2pt
        \hbox{\vrule width.#2pt height#1pt \kern#1pt
            \vrule width.#2pt}
        \hrule height.#2pt}}}
\def\depth{{\rm depth}\,}
\def\reg{{\rm reg\,}}
\def\regi{{\rm reg\kern-4pt-\kern-4ptindex}}
\def\restr{{\kern-1pt\restriction\kern-1pt}}
\def\NN{{\mathbb N}}
\def\ZZ{{\mathbb Z}}
\def\C{{\mathcal C}}
\begin{document}

\title{\bf Monomial ideals with 3-linear resolutions}
\author{Marcel Morales$^2$, Abbas Nasrollah Nejad$^1$, \\ Ali Akbar Yazdan Pour$^{1,2}$, Rashid Zaare-Nahandi$^1$\\
\small $^{1}$ Institute for Advanced Studies in Basic Sciences, P. O. Box 45195-1159, Zanjan, Iran\\
\small $^{2}$ Universit\'e de Grenoble I, Institut Fourier, Laboratoire de Math\'ematiques, France}
\date{}
\maketitle

\footnotetext{MSC 2010: Primary 13C14, 13D02; Secondary 13F55.\\
Key words: minimal free resolution, linear resolution, uniform clutter, monomial ideal, regularity.}

\begin{abstract}

In this paper, we study Cstelnuovo-Mumford regularity of square-free monomial ideals generated in degree $3$. We define some operations on the clutters associated to such ideals and prove that the regularity is conserved under these operations. We apply the operations to introduce some classes of ideals with linear resolutions and also show that any clutter corresponding to a triangulation of the sphere does not have linear resolution while any proper sub-clutter of it has a linear resolution.

\end{abstract}

\section{Introduction}
Let $S=K[x_1,\ldots,x_n]$ be the polynomial ring over a field $K$ and $I$ be a homogeneous ideal of $S$.
Computing the Castenuovo-Mumford regularity of $I$ or even proving that the ideal $I$ has linear resolutions is difficult in general. It is known that a monomial ideal has $d$-linear resolution if and only if its polarization, which is a square-free monomial ideal, has $d$-linear resolution. Therefore, classification of monomial ideals with linear resolution is equal to classification of square-free monomial ideals. In this subject, one of the fundamental results is the Eagon-Reiner theorem, which says that the Stanley-Reisner ideal of a simplicial complex has a linear resolution if and only if its Alexander dual is Cohen-Macaulay.

The problem of existing 2-linear resolution is completely solved by Fr\"oberg \cite{Fr} (See also \cite{Morales}). An ideals of $S$ generated by square-free monomials of degree 2 can be assumed as edge ideal of a graph. Fr\"oberg proved that the edge ideal of a finite simple graph $G$ has linear resolution if and only if the complementary graph $\bar{G}$ of $G$ is chordal, i.e., there is no any minimal induced cycle in $G$ of length more than 3.

Another combinatorial objects corresponding to square-free monomial ideals are clutters which are special cases of hypergraphs. Let $[n]=\{1, \ldots, n\}$. A clutter $\C$ on a vertex set $[n]$ is a set of subsets of $[n]$ (called circuits of $\C$) such that if $e_1$ and $e_2$ are distinct circuits, then $e_1 \nsubseteq e_2$.
A $d$-circuit is a circuit with $d$ vertices, and a clutter is called $d$-uniform if every circuit is a $d$-circuit. To a clutter $\C$ with circuits $\{e_1,\ldots,e_m\}$ the ideal generated by $X_{e_j}$ for all $j=1,\ldots, m$ is corresponded which is called circuit ideal of $\C$ and denoted by $I(\C)$. One says that a $d$-uniform clutter $\C$ has a linear resolution if the circuit ideal of the complimentary clutter $\bar{\C}$ has $d$-linear resolution. Trying to generalize similar result of Fr\"oberg for $d$-uniform clutters ($d>2$), several mathematicians including E. Emtander \cite{Emtander} and R. Woodroofe \cite{Wood1} have defined notion of chordal clutters and proved that any $d$-uniform chordal clutter has a linear resolution. These results are one-sided. That is, there are non-chordal $d$-uniform clutters with linear resolution.

In the present paper, we introduce some reduction processes on 3-uniform clutters which do not change regularity of the minimal resolution. Then a class of 3-uniform clutters which have linear resolution and a class of 3-uniform clutters which do not have linear resolution are constructed.

Some of the results of this paper have been conjectured after explicit computations performed by the computer algebra systems {\sc Singular}~\cite{Si} and \cocoa~\cite{coco}.

\section{Preliminaries}

Let $K$ be a field, $S=K[x_1,\ldots, x_n]$ be the polynomial ring over $K$  with the standard graduation and $\mathfrak{m} = (x_1, \ldots, x_n)$ be the irredundant maximal ideal of $S$.

We quote the following well-known results that will be used in this paper.

\begin{thm}[{Grothendieck, {\cite[Theorem 6.3]{Stan1}}}] \label{Grothendieck}
Let $M$ be a finitely generated $S$-module. Let $t = \depth(M)$ and $d = \dim(M)$. Then $H^i_\mathfrak{m}(M) \neq 0$ for $i = t$ and $i = d$, and $H^i_\mathfrak{m}(M) = 0$ for $i < t$ and $i > d$.
\end{thm}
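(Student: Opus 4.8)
The plan is to read off both ends of the nonvanishing range of $H^i_\mathfrak{m}(M)$ from two separate inductions, after recalling the two structural facts I will lean on throughout: that each local cohomology module is computed as the cohomology of the \v{C}ech complex on a generating set $x_1,\dots,x_n$ of $\mathfrak{m}$ (equivalently as $\varinjlim_n \Ext{S}{i}{S/\mathfrak{m}^n}{M}$), and that consequently every $H^i_\mathfrak{m}(M)$ is an $\mathfrak{m}$-torsion module. The \v{C}ech complex has length $n$, so $H^i_\mathfrak{m}(M)=0$ for $i>n$ comes for free; the content of the theorem is to sharpen the upper bound to $d$ and to locate the lower edge exactly at $t$.

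The depth end I would handle by induction on $t=\depth(M)$. If $t=0$ then $\mathfrak{m}\in\mathrm{Ass}(M)$, so $M$ contains a copy of $K=S/\mathfrak{m}$, whence $\Gamma_\mathfrak{m}(M)=H^0_\mathfrak{m}(M)\neq 0$ and there is nothing below to vanish. If $t>0$, choose $x\in\mathfrak{m}$ a nonzerodivisor on $M$ and feed the short exact sequence $0\to M\xrightarrow{\,x\,} M\to M/xM\to 0$ into the long exact sequence of local cohomology. For $i<t$ the inductive hypothesis gives $H^{i-1}_\mathfrak{m}(M/xM)=0$, so multiplication by $x$ is injective on $H^i_\mathfrak{m}(M)$; since this module is $\mathfrak{m}$-torsion and $x\in\mathfrak{m}$, injectivity of $x$ forces it to vanish (iterate $x$ against the power of $\mathfrak{m}$ annihilating any given element). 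For $i=t$ the segment $0=H^{t-1}_\mathfrak{m}(M)\to H^{t-1}_\mathfrak{m}(M/xM)\to H^t_\mathfrak{m}(M)$ embeds the module $H^{t-1}_\mathfrak{m}(M/xM)$, nonzero because $\depth(M/xM)=t-1$, into $H^t_\mathfrak{m}(M)$, giving the desired non-vanishing.

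The dimension end splits into vanishing and non-vanishing. For $i>d$ I would induct on $d=\dim(M)$, the base case $d=0$ (finite length, hence $\mathfrak{m}$-torsion) being immediate. Replacing $M$ by $M/\Gamma_\mathfrak{m}(M)$ changes no $H^i_\mathfrak{m}$ with $i>0$ and leaves the dimension unchanged while forcing positive depth, so one may assume a nonzerodivisor $x\in\mathfrak{m}$ exists; then $\dim(M/xM)=d-1$, and the inductive vanishing $H^{i-1}_\mathfrak{m}(M/xM)=0$ shows $x$ acts injectively on the $\mathfrak{m}$-torsion module $H^i_\mathfrak{m}(M)$ for $i>d$, hence it is zero. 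The non-vanishing $H^d_\mathfrak{m}(M)\neq 0$ is the crux. Here I would take a prime filtration of $M$ with quotients $S/\mathfrak{p}_j$, use the Grothendieck vanishing just proved to discard all factors of dimension $<d$ from the relevant long exact sequences, and thereby reduce to proving $H^d_\mathfrak{m}(S/\mathfrak{p})\neq 0$ for a single prime with $\dim S/\mathfrak{p}=d$. For such a domain quotient $R=S/\mathfrak{p}$, Noether normalization presents $R$ as a module-finite extension of a polynomial ring $A=K[y_1,\dots,y_d]$; since the extension is integral, $\mathfrak{m}_A R$ is $\mathfrak{m}_R$-primary and base-independence of local cohomology gives $H^d_\mathfrak{m}(R)=H^d_{\mathfrak{m}_A}(R)$, which is nonzero because the top \v{C}ech cohomology of any nonzero finitely generated $A$-module of dimension $d$ over the $d$-dimensional polynomial ring $A$ does not vanish.

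The routine parts are the two torsion-and-long-exact-sequence inductions, which are essentially formal once the $\mathfrak{m}$-torsion lemma is in hand. I expect the genuine obstacle to be the non-vanishing at the top degree $i=d$: it is the one place where a vanishing statement alone does not suffice and one must exhibit an actual nonzero class, which is why the argument has to descend to a domain and ultimately to the explicit top local cohomology of a polynomial ring (the inverse-polynomial module), transporting non-vanishing back up through Noether normalization and the prime filtration.
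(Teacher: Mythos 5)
The paper does not prove this statement: it is quoted as a known result with a citation to Stanley's book, so there is no internal proof to compare against; I can only assess your argument on its own terms. Three of the four assertions are handled correctly and by the standard route: the vanishing for $i<t$ and the non-vanishing at $i=t$ by induction on depth using a regular element, the $\mathfrak{m}$-torsion trick, and the long exact sequence; and the vanishing for $i>d$ by induction on dimension after passing to $M/\Gamma_\mathfrak{m}(M)$. These are fine.

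The genuine gap is in the non-vanishing at $i=d$, and it is not where you locate it. The step ``take a prime filtration and use the vanishing just proved to discard all factors of dimension $<d$'' does not work: if $0\to M'\to M''\to S/\mathfrak{q}\to 0$ is a link of the filtration with $\dim S/\mathfrak{q}=d-1$, the long exact sequence gives $H^{d}_\mathfrak{m}(M'')\cong \mathrm{coker}\bigl(H^{d-1}_\mathfrak{m}(S/\mathfrak{q})\to H^{d}_\mathfrak{m}(M')\bigr)$, and $H^{d-1}_\mathfrak{m}(S/\mathfrak{q})\neq 0$; nothing you have proved rules out the connecting map being surjective and killing $H^d$. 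Vanishing alone only lets you discard factors of dimension $\leq d-2$, and lets non-vanishing pass from a \emph{quotient} up to the ambient module (since $H^{d+1}$ of the kernel vanishes), not from a submodule. The same problem resurfaces, now as a circularity, in your final sentence: ``the top \v{C}ech cohomology of any nonzero finitely generated $A$-module of dimension $d$ over $A=K[y_1,\dots,y_d]$ does not vanish'' is, after Noether normalization, exactly the statement being proved; the explicit inverse-polynomial computation only covers $M=A$ itself. The standard repair keeps your Noether-normalization idea but reverses all the comparisons so that the known-nonzero module sits in the quotient position: first replace $M$ by $M/N$ for $N$ the largest submodule of dimension $<d$ (legitimate since $H^{d+1}_\mathfrak{m}(N)=0$ forces $H^d_\mathfrak{m}(M)\to H^d_\mathfrak{m}(M/N)$ to be onto), pass to a faithful module over a $d$-dimensional domain and then to its torsion-free quotient over the Noether normalization $A$; a torsion-free $A$-module $N'$ of rank $r\geq 1$ embeds in $A^r$ with torsion cokernel $Q$, and the exact sequence $H^d_{\mathfrak{m}_A}(N')\to H^d_{\mathfrak{m}_A}(A)^r\to H^d_{\mathfrak{m}_A}(Q)=0$ surjects onto the nonzero inverse-polynomial module, which finishes the proof. (In the graded setting of this paper one can also sidestep all of this with graded local duality, reducing both non-vanishing claims to the non-vanishing of $\mathrm{Ext}^{\,n-t}_S(M,S)$ and of $\mathrm{Ext}^{\,n-d}_S(M,S)$ localized at a minimal prime of maximal dimension.)
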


\begin{cor}
Let $M$ be a finitely generated $S$-module. $M$ is Cohen-Macaulay if and only if $H^i_\mathfrak{m}(M) = 0$ for $i < \dim M$.
\end{cor}

\begin{lem} \label{union of clutters 1}
Let $S=K[x_1, \ldots, x_n, y_1, \ldots, y_m]$ be the polynomial ring and $I$ be an ideal in $K[y_1, \ldots, y_m]$. Then,
$$\depth \dfrac{S}{\left( x_1\cdots x_n \right) I} = \depth \dfrac{S}{I}.$$
\end{lem}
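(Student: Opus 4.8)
The plan is to reduce everything to a single short exact sequence together with the standard depth inequalities. Write $R = K[y_1, \ldots, y_m]$, set $u = x_1 \cdots x_n$, and let $J = IS$ denote the extension of $I$ to $S$, so that $(x_1 \cdots x_n)I$ is precisely the product ideal $uJ$. Since $S = R[x_1, \ldots, x_n]$, we have $S/J = (R/I)[x_1, \ldots, x_n]$, a polynomial ring over $R/I$; as $x_1, \ldots, x_n$ form a regular sequence on it with quotient $R/I$, the usual behaviour of depth under adjunction of variables gives $\depth S/J = \depth(R/I) + n$. Two facts drive the argument: first, $u$ is a nonzerodivisor on the domain $S$; second, $u$ is a nonzerodivisor on $S/J = (R/I)[x_1, \ldots, x_n]$, because the $x_i$ are indeterminates over $R/I$.

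First I would build the key short exact sequence. The inclusions $uJ \subseteq uS$ and $uJ \subseteq J$ give a surjection $S/(uJ) \twoheadrightarrow S/(uS)$ with kernel $uS/uJ$. Because $u$ is a nonzerodivisor, multiplication by $u$ is an isomorphism $S \xrightarrow{\sim} uS$ carrying $J$ onto $uJ$ (one uses $us \in uJ \iff s \in J$), and it therefore induces an isomorphism $S/J \xrightarrow{\sim} uS/uJ$. This yields
$$0 \longrightarrow S/J \longrightarrow S/(uJ) \longrightarrow S/(uS) \longrightarrow 0.$$

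Next I would feed this into the depth lemma for a short exact sequence $0 \to A \to B \to C \to 0$, namely $\depth B \geq \min(\depth A, \depth C)$ and $\depth A \geq \min(\depth B, \depth C + 1)$, with $A = S/J$, $B = S/(uJ)$, $C = S/(uS)$. Since $S$ is Cohen-Macaulay of dimension $n+m$ and $u$ is a nonzerodivisor, $\depth S/(uS) = n + m - 1$. For $I$ a proper nonzero ideal, $R$ being a domain forces $\dim R/I \leq m - 1$, whence $\depth S/J = \depth(R/I) + n \leq n + m - 1 = \depth S/(uS)$. With $\depth A \le \depth C$ and $\depth C + 1 = n+m > \depth A$, the first inequality gives $\depth B \geq \depth A$, while the second forces $\depth B \leq \depth A$ (otherwise $\min(\depth B, \depth C + 1)$ would strictly exceed $\depth A$). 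Hence $\depth S/(uJ) = \depth S/J$.

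The main obstacle, and the only place needing genuine care, is this depth comparison: the inequalities pin down $\depth B$ exactly only because $\depth A \le \depth C$ holds, so I would make sure that the variables $x_1, \ldots, x_n$ really contribute the full $+n$ to $\depth S/J$ while $I \neq 0$ keeps $\dim R/I$ strictly below $m$. The degenerate cases are harmless and should be dispatched first: if $I = 0$ then $uJ = 0$ and both sides equal $\depth S$, and for the clutter ideals of interest $I$ is always a proper nonzero ideal.
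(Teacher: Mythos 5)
Your argument is correct. Note first that the paper itself offers no proof of this lemma: it is quoted in the Preliminaries as a well-known fact, so there is no in-paper argument to compare against line by line. Your route --- the short exact sequence $0 \to S/IS \to S/(uIS) \to S/(uS) \to 0$ obtained from the isomorphism $S/IS \cong uS/(uIS)$ (valid because $u=x_1\cdots x_n$ is a nonzerodivisor on the domain $S$), combined with the two standard depth inequalities for a short exact sequence --- is sound, and the one delicate point, namely that $\depth S/IS = \depth(R/I)+n \le n+m-1 = \depth S/(uS)$ so that the inequalities actually pin down $\depth S/(uIS)$, is exactly the point you flag and justify (height of a nonzero ideal in a domain is at least one). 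The degenerate cases are handled appropriately; indeed for $I=R$ the statement is false as written, so excluding it is necessary, not just convenient. A proof more in the spirit of the paper's own toolkit would observe that $(x_1\cdots x_n)\cap IS = (x_1\cdots x_n)IS$, since the generators of the two ideals live in disjoint sets of variables, and then apply Lemma~\ref{sum} with $I_1=(x_1\cdots x_n)$ and $I_2=IS$: one has $\depth S/(I_1+I_2)=\depth(R/I)+n-1$ because $u$ remains a nonzerodivisor on $(R/I)[x_1,\ldots,x_n]$, and the hypotheses of that lemma are met for the same numerical reason you use. The two arguments are essentially equivalent (Lemma~\ref{sum} is itself proved by Mayer--Vietoris, which encodes the same long exact sequence information as your depth lemma); yours is marginally more self-contained, while the Mayer--Vietoris version reuses machinery the paper has already set up.
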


\begin{defn}[Alexander duality] \rm
For a square-free monomial ideal $I= \left( M_1, \ldots, M_q \right) \subset K[x_1, \ldots, x_n]$, the Alexander dual $I^\vee$, of $I$ is defined to be
$$I^\vee= P_{M_1} \cap \cdots \cap P_{M_q} $$
where $P_{M_i}$ is prime ideal generated by $\{x_j \colon x_j | M_i \}$.
\end{defn}

\begin{defn} \rm
Let $I$ be a non-zero homogeneous ideal of $S$. For every $i \in \NN$ one defines
$$t^S_i(I) = \max \{j \colon \beta^S_{i,j}(I) \neq 0 \}$$
where $\beta^S_{i,j}(I)$ is the $i,j$-th graded Betti number of $I$ as an $S$-module. The \textit{Castelnuovo-Mumford regularity} of $I$, is given by
$$\reg (I)= \sup \{t^S_i(I)-i \colon i \in \ZZ \}.$$
We say that the ideal $I$ has a \textit{$d$-linear resolution} if $I$ is generated by homogeneous polynomials of degree $d$ and $\beta_{i,j}^S (I) =0$ for all $j \neq i+d$ and $i \geq 0$.
For an ideal which has $d$-linear resolution, the Castelnuovo-Mumford regularity would be $d$.
\end{defn}

\begin{thm}[{Eagon-Reiner \cite[Theorem 3]{Eagon-Reiner}}] \label{Eagon-Reiner}
Let $I$ be a square-free monomial ideal in $S=K[x_1, \ldots, x_n]$. $I$ has $q$-linear resolution if and only if $S/I^\vee$ is Cohen-Macaulay of dimension $n - q$.
\end{thm}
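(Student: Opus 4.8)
\pf
The plan is to pass to the simplicial picture and reduce the whole statement to one homological duality. Write $I = I_\Delta$ for the Stanley--Reisner ideal of a simplicial complex $\Delta$ on $[n]$; then, unwinding the definition of Alexander duality, $I^\vee = I_{\Delta^\vee}$ is the Stanley--Reisner ideal of the dual complex $\Delta^\vee = \{F \subseteq [n] : [n]\setminus F \notin \Delta\}$, so $S/I^\vee = K[\Delta^\vee]$. I would first dispose of the dimension bookkeeping, which is elementary: the minimal primes of $I^\vee = \bigcap_i P_{M_i}$ lie among the $P_{M_i}$, and $\hht P_{M_i} = \deg M_i$, so $\dim S/I^\vee = n - \min_i \deg M_i$. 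Hence $\dim S/I^\vee = n-q$ precisely when the least degree of a minimal generator of $I$ equals $q$. This settles the ``dimension'' half of the statement and isolates the genuine content, a duality between the regularity of $I$ and the depth of $S/I^\vee$.

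The core I would establish is the identity $\reg(I) = \projdim(S/I^\vee)$, which via the Auslander--Buchsbaum formula $\projdim(S/I^\vee) = n - \depth(S/I^\vee)$ is equivalent to $\depth(S/I^\vee) = n - \reg(I)$. The tool on both sides is Hochster's formula for square-free monomial ideals, $\beta^S_{i,i+j}(S/I_\Gamma) = \sum_{|W|=j} \dim_K \tilde H_{j-i-1}(\Gamma_W; K)$, where $\Gamma_W$ is the induced subcomplex on $W \subseteq [n]$. Applying it to $\Gamma = \Delta^\vee$ reads off $\projdim(S/I^\vee)$ as the largest homological index carrying nonzero reduced homology of some induced subcomplex of $\Delta^\vee$; applying it to $\Gamma = \Delta$ (and using $\beta_{i,j}(I) = \beta_{i+1,j}(S/I)$) expresses $\reg(I)$ through the reduced homology of the induced subcomplexes $\Delta_W$. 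The bridge between the two is combinatorial Alexander duality, $\tilde H_{\ell}(\Gamma; K) \cong \tilde H^{m - \ell - 3}(\Gamma^\vee; K)$ for a complex $\Gamma$ on $m$ vertices, together with the fact that dualizing interchanges the formation of induced subcomplexes and of links. Pushing the indices through this dictionary should identify the two Hochster expressions and yield the desired equality $\reg(I) = \projdim(S/I^\vee)$; equivalently it shows that $q$-linearity of $I$ is the statement that every $\Delta_W$ has reduced homology concentrated in the single degree $q-2$.

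With the duality in hand the equivalence assembles formally, and here I would invoke the Corollary to Grothendieck's theorem quoted above: $S/I^\vee$ is Cohen--Macaulay iff $H^i_{\mathfrak m}(S/I^\vee)=0$ for $i<\dim S/I^\vee$, i.e. iff $\depth(S/I^\vee)=\dim(S/I^\vee)$. For the forward direction, a $q$-linear resolution forces every minimal generator of $I$ into degree $q$, so $\dim S/I^\vee = n-q$ by the first paragraph, while $\reg(I)=q$ gives $\depth(S/I^\vee)=n-q=\dim S/I^\vee$, whence Cohen--Macaulayness. Conversely, Cohen--Macaulay of dimension $n-q$ gives $\depth=\dim=n-q$, hence $\reg(I)=q$; since $\reg(I)\ge t_0(I)$ equals the top generator degree, and $\dim S/I^\vee=n-q$ pins the least generator degree at $q$, all generators sit in degree $q$, and $\reg(I)=q$ upgrades this to a $q$-linear resolution (the twists $t_i(I)$ strictly increase in a minimal resolution). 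The main obstacle, where all the care lies, is the middle step: matching the index shifts in the two Hochster formulas with those in combinatorial Alexander duality, and correctly translating links in $\Delta^\vee$ into Alexander duals of restrictions of $\Delta$. Everything surrounding that computation is formal. \qed
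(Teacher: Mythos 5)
The paper does not prove this statement; it is quoted as a known result with a citation to Eagon--Reiner, so there is no internal proof to compare yours against. Judged on its own, your sketch is the standard argument and is structurally sound: the dimension count via the minimal primes $P_{M_i}$ of $I^\vee$ is correct, and the final assembly is correct (your parenthetical about increasing twists is really the minimality fact that $\beta_{i,j}(I)=0$ for $j<i+\mathrm{indeg}(I)$, which together with $\reg(I)=q$ forces linearity). Two remarks. First, the identity you take as the core, $\reg(I)=\projdim(S/I^\vee)$, is Terai's theorem --- exactly the statement the paper quotes separately as Theorem~\ref{Terai} --- and is strictly stronger than what Eagon--Reiner requires; for the theorem as stated one can bypass the full Betti table of $S/I^\vee$ by using Reisner's criterion for Cohen--Macaulayness of $K[\Delta^\vee]$ together with $\mathrm{link}_{\Delta^\vee}(F)=\big(\Delta_{[n]\setminus F}\big)^\vee$ (dual formed inside $[n]\setminus F$) and combinatorial Alexander duality, which is closer to the original proof. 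Second, the one place where the proof actually lives --- ``pushing the indices through this dictionary'' --- is asserted rather than carried out, and your statement of Hochster's formula already contains an index slip: as written, $\beta^S_{i,i+j}(S/I_\Gamma)$ should be paired with subsets $W$ of size $i+j$ and homology in degree $j-1$, or else the left side should be $\beta^S_{i,j}$; this is precisely the bookkeeping that the deferred step must get right. So what you have is a correct outline of the standard proof, not yet a proof.
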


\begin{thm}[{\cite[Theorem 2.1]{Terai}}]\label{Terai}
Let $I$ be a square-free monomial ideal in $S=K[x_1, \ldots, x_n]$ with $\dim S/I \leq n-2$. Then,
$$\dim \frac{S}{I^\vee} - \depth \frac{S}{I^\vee}= \reg(I) - {\rm indeg} (I).$$
\end{thm}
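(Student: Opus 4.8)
The plan is to reduce the asserted equality to the two facts that (i) $\dim S/I^\vee = n - {\rm indeg}(I)$ and (ii) $\reg(I) = \projdim(S/I^\vee)$, and then combine them with the Auslander-Buchsbaum formula. Writing $R = S/I^\vee$, Auslander-Buchsbaum gives $\projdim R + \depth R = n$, so $\dim R - \depth R = \dim R - n + \projdim R$. Substituting (i) and (ii) turns the right-hand side into $(n - {\rm indeg}(I)) - n + \reg(I) = \reg(I) - {\rm indeg}(I)$, which is exactly the claim. Thus everything rests on (i) and (ii); the hypothesis $\dim S/I \le n-2$, equivalently $\hht I \ge 2$, places us in Terai's setting and guarantees that $I^\vee$ has no generator of degree one, keeping the dual nondegenerate.

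For (i), I would read off the minimal primes of $I^\vee$ directly from the definition of the Alexander dual. If $I = (M_1, \dots, M_q)$ with the $M_i$ the square-free minimal generators, then $I^\vee = \bigcap_{i=1}^q P_{M_i}$ is an intersection of monomial primes with $\hht P_{M_i} = \deg M_i$. Because no $M_i$ divides another, no containment $P_{M_i} \subseteq P_{M_j}$ occurs, so the $P_{M_i}$ are precisely the minimal primes of $I^\vee$. Hence $\hht I^\vee = \min_i \deg M_i = {\rm indeg}(I)$ and $\dim S/I^\vee = n - {\rm indeg}(I)$.

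The heart of the argument is (ii), the Terai duality $\reg(I) = \projdim(S/I^\vee)$, and this is the step I expect to be the main obstacle. My approach would be to compare the graded Betti numbers of the two sides combinatorially. Writing $I = I_\Delta$ for the Stanley-Reisner ideal of a simplicial complex $\Delta$ on $[n]$, so that $I^\vee = I_{\Delta^\vee}$, Hochster's formula expresses each $\beta^S_{i,j}$ of $S/I$ (resp. $S/I^\vee$) as a sum of dimensions of reduced homology groups $\tilde{H}_{\bullet}(\Delta_W; K)$ (resp. $\tilde{H}_{\bullet}((\Delta^\vee)_W; K)$) over subsets $W \subseteq [n]$ with $|W| = j$. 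Combinatorial Alexander duality identifies $\tilde{H}_{\bullet}$ of an induced subcomplex of $\Delta$ with reduced cohomology of the complementary induced subcomplex of $\Delta^\vee$, which yields a bijective matching of the nonzero Betti numbers of $S/I^\vee$ with those lying in the top diagonal $t^S_i(I) - i$ of $I$. Tracking these indices, the largest homological degree $i$ with $\beta^S_{i,\ast}(S/I^\vee) \ne 0$, namely $\projdim(S/I^\vee)$, should correspond exactly to the largest value of $t^S_i(I) - i$, namely $\reg(I)$.

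The delicate point, and where I would spend the most care, is the bookkeeping of the two gradings under Alexander duality: the homological index on the $S/I^\vee$ side must be shown to convert into the regularity-defining quantity $t^S_i(I)-i$ on the $I$ side, and one must check that the extremal contributions survive the duality rather than cancelling. As a consistency check I would specialize to the case $\reg(I) = {\rm indeg}(I)$, where the formula predicts $\dim S/I^\vee = \depth S/I^\vee$, i.e. $S/I^\vee$ Cohen-Macaulay; this is precisely the Eagon-Reiner theorem (Theorem~\ref{Eagon-Reiner}), so the proposed identity is the quantitative refinement of Eagon-Reiner measuring the failure of linearity of $I$ by the Cohen-Macaulay defect of its dual.
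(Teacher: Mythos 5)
The paper offers no proof of this statement at all: it is imported verbatim from Terai's preprint \cite{Terai} (the labels ``Theorem 2.1'' and the citation make this explicit), so there is no internal argument to compare yours against. Judged on its own terms, your outer frame is correct and is the standard way this identity is derived: Auslander--Buchsbaum gives $\projdim (S/I^\vee) + \depth (S/I^\vee) = n$, your computation of $\dim S/I^\vee = n - {\rm indeg}(I)$ is complete and correct (the $P_{M_i}$ are pairwise incomparable because no $M_i$ divides another, hence they are exactly the minimal primes of $I^\vee$ and $\hht I^\vee = \min_i \deg M_i$), and the arithmetic combining these with $\reg(I)=\projdim(S/I^\vee)$ is right. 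You have also correctly identified the role of the hypothesis $\dim S/I \le n-2$ and the correct degeneration to Eagon--Reiner when $\reg(I)={\rm indeg}(I)$.

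The gap is that step (ii), the duality $\reg(I)=\projdim(S/I^\vee)$, \emph{is} Terai's theorem --- everything else is a two-line reduction --- and your treatment of it remains a sketch at exactly the point you flagged. Two concrete remarks. First, the form of combinatorial Alexander duality you invoke is not quite the right one: the complex dual to the induced subcomplex $\Delta_W$ (dual taken inside the vertex set $W$) is the \emph{link} ${\rm link}_{\Delta^\vee}([n]\setminus W)$, not the induced subcomplex of $\Delta^\vee$ on the complementary vertex set; the isomorphism one actually needs is $\tilde{H}_{i}\bigl({\rm link}_{\Delta^\vee}([n]\setminus W);K\bigr) \cong \tilde{H}^{|W|-i-3}(\Delta_W;K)$, and one must therefore pair Hochster's formula for $\beta_{i,j}(S/I_{\Delta})$ (a sum over induced subcomplexes) with its dual version for $S/I_{\Delta^\vee}$ expressed through links before the extremal indices can be matched. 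Second, your worry about ``extremal contributions cancelling'' is unfounded and should be dispatched explicitly: Hochster's formula writes each graded Betti number as a sum of nonnegative dimensions of homology groups, so a single nonvanishing summand forces the Betti number to be nonzero and no cancellation is possible; the only real work is the index conversion, which you have described but not performed. As a proof the attempt is therefore incomplete, though the strategy is the correct (and essentially the only known elementary) one.
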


\begin{rem} \rm \label{Passing to regularity}
Let $I, J$ be square-free monomial ideals generated by elements of degree $d\geq 2$ in $S=K[x_1, \ldots, x_n]$. By Theorem~\ref{Terai}, we have
\begin{equation*}
\begin{split}
\reg (I) = n - \depth\frac{S}{I^\vee}, \qquad \reg (J) = n - \depth\frac{S}{J^\vee}.
\end{split}
\end{equation*}
Therefore, $\reg (I) = \reg (J)$ if and only if $\depth {S}/{I^\vee} = \depth {S}/{J^\vee}$.
\end{rem}

\begin{defn}[Clutter] \rm
Let $[n]=1, \ldots, n$. A \textit{clutter} $\C$ on a vertex set $[n]$ is a set of subsets of $[n]$ (called \textit{circuits} of $\C$) such that if $e_1$ and $e_2$ are distinct circuits of $\C$ then $e_1 \nsubseteq e_2$.
A \textit{$d$-circuit} is a circuit consisting of exactly $d$ vertices, and a clutter is \textit{$d$-uniform} if every circuit has exactly $d$ vertices.
\end{defn}

For a non-empty clutter $\C$ on vertex set $[n]$, we define the ideal $I(\C)$, as follows:
$$I(\C) = \left( \textbf{x}_F \colon \quad F \in \C \right)$$
and we define $I(\varnothing) = 0$.

Let $n, d$ be positive integers and $d \leq n$. We define $\C_{n,d}$, the \textit{maximal $d$-uniform clutter on $[n]$} as follow:
$$\C_{n,d}=\{F \subset [n] \colon \quad |F|=d\}.$$
If $\C$ is a $d$-uniform clutter on $[n]$, we define $\bar{\C}$, the \textit{complement} of $\C$, to be
$$\bar{\C}= \C_{n,d} \setminus \C = \{F \subset [n] \colon \quad |F|=d, \,F \notin \C\}.$$
Frequently in this paper, we take a $d$-uniform clutter $\C$ and we consider the square-free ideal $I=I(\bar{\C})$ in the polynomial ring $S=K[x_1, \ldots, x_n]$. The ideal $I$ is called \textit{circuit ideal}.

\begin{defn}[Clique] \rm
Let $\C$ be a $d$-uniform clutter on $[n]$. A subset $G \subset [n]$ is called a \textit{clique} in $\C$, if all $d$-subset of $G$ belongs to $\C$.
\end{defn}

\begin{rem} \rm \label{clique}
Let $\C$ be a $d$-uniform clutter on $[n]$ and $I=I(\bar{\C})$ be the circuit ideal. If $G$ is a clique in $\C$ and $F \in \bar{\C}$, then $([n] \setminus G) \cap F \neq \varnothing$. So that $\textbf{x}_{[n] \setminus G} \in P_F$. Hence
$$\textbf{x}_{[n] \setminus G} \in \bigcap\limits_{F \in \bar{\C}} P_F = I^\vee.$$
\end{rem}

\begin{ex} \label{linearity of C_n,d}\rm
We show that $I(\C_{n,d})$ has linear resolution.
Let $\Delta$ be a simplex on $[n]$. Then, clearly $I_\Delta = (0)$ and $K[\Delta] = K[x_1, \ldots, x_n]$ is Cohen-Macaulay. It follows from {\cite[Exercise 5.1.23]{BH}}
that for any $r<n$, $\Delta^{(r)} = \langle F \subset [n] \colon \quad |F|=r+1 \rangle$ is Cohen-Macaulay. Note that
$$I_{\Delta^{(r)}}^\vee = I \left( \overline{\Delta^{(r)}} \right)= \left( \text{\rm \textbf{x}}_F \colon \quad |F|= n-(r+1) \right)$$
which has linear resolution by Theorem~\ref{Eagon-Reiner}.
Using this argument for $r= n-d-1$, one can say, $I_{\Delta^{(n-d-1)}}^\vee = I(\C_{n,d})$ has linear resolution.
\end{ex}

\begin{defn}[Simplicial submaximal circuit] \rm
Let $\C$ be a $d$-uniform clutter on $[n]$. A $(d-1)$-subset $e \subset [n]$ is called a \textit{submaximal circuit} of $\C$ if there exists $F \in \C$ such that $e \subset F$. The set of all submaximal circuits of $\C$ is denoted by $E(\C)$.
For $e \in E(\C)$, let ${\rm N}[e]= e \cup \big\{ c \in [n] \colon e \cup \{c\} \in \C \big\} \subset [n]$.
We say that $e$ is a \textit{simplicial submaximal circuit} if ${\rm N}[e]$ is a clique in $\C$. In case of 3-uniform clutters, $E(\C)$ is called the edge set and we say \textit{simplicial edge} instead of simplicial submaximal circuit.
\end{defn}

\section{Operations on Clutters}

In this section, for a clutter $\C$, we introduce some operations as changing or removing circuits which do not change linearity of resolution of the circuit ideal. We begin this section with the following well-known results.

\begin{lem}\label{Moduli sum rule}
Let $M$ be an $R$-module. For any submodules $A,B,C$ of $M$ such that $B \subset C$, one has
\begin{equation}
(A+B) \cap C = (A \cap C) + B .
\end{equation}
\end{lem}

\begin{thm}[Mayer-Vietoris sequence] \label{Mayer-Vietoris sequence}
For any two ideals $I_1, I_2$ in the commutative Noetherian local ring $(R, \mathfrak{m})$, the short exact sequence
$$ 0 \longrightarrow \frac{R}{I_1 \cap I_2} \longrightarrow \frac{R}{I_1} \oplus \frac{R}{I_2} \longrightarrow \frac{R}{I_1+I_2} \longrightarrow 0 $$
gives rise to the long exact sequence
\begin{equation*} \label{Mayer}
\begin{split}
\cdots \rightarrow H_\mathfrak{m}^{i-1} \left( \tfrac{R}{I_1+I_2}\right) \rightarrow H_\mathfrak{m}^i \left(\tfrac{R}{I_1 \cap I_2}\right)
 \longrightarrow H_\mathfrak{m}^i \left(\tfrac{R}{I_1}\right) \oplus H_\mathfrak{m}^i \left(\tfrac{R}{I_2}\right)
 \longrightarrow H_\mathfrak{m}^i \left(\tfrac{R}{I_1+I_2}\right) \rightarrow \\
 \rightarrow H_\mathfrak{m}^{i+1} \left( \tfrac{R}{I_1+I_2}\right) \rightarrow \cdots.
\end{split}
\end{equation*}
\end{thm}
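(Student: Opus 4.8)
The plan is to establish the statement in two stages: first verify that the displayed three-term sequence is genuinely a short exact sequence of $R$-modules, and then invoke the fact that local cohomology is a cohomological $\delta$-functor in order to pass to the long exact sequence.

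For the first stage, I would make the two maps explicit. The left-hand map sends $r + (I_1 \cap I_2) \mapsto (r + I_1,\, r + I_2)$, and the right-hand map sends $(a + I_1,\, b + I_2) \mapsto (a - b) + (I_1 + I_2)$; one checks at once that both are well-defined and $R$-linear. Injectivity of the first map is immediate, since $r \in I_1$ and $r \in I_2$ together force $r \in I_1 \cap I_2$, and surjectivity of the second map is clear upon taking $b = 0$. The only point requiring a genuine argument is exactness in the middle: the composition is visibly zero, and conversely, if $(a + I_1,\, b + I_2)$ lies in the kernel of the second map, then $a - b = i_1 + i_2$ with $i_j \in I_j$, so that $r := a - i_1 = b + i_2$ is a common lift whose image is the given pair. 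Hence the kernel of the second map coincides with the image of the first.

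For the second stage, I would recall that the local cohomology modules $H^i_\mathfrak{m}(-)$ are by definition the right derived functors of the $\mathfrak{m}$-torsion functor $\Gamma_\mathfrak{m}(-)$, and therefore form a cohomological $\delta$-functor on the category of $R$-modules. Applying this $\delta$-functor to the short exact sequence from the first stage produces, by the standard machinery of derived functors together with the additivity $H^i_\mathfrak{m}(M \oplus N) = H^i_\mathfrak{m}(M) \oplus H^i_\mathfrak{m}(N)$, the long exact sequence
\begin{equation*}
\cdots \to H^{i-1}_\mathfrak{m}\!\left(\tfrac{R}{I_1+I_2}\right) \xrightarrow{\ \delta\ } H^i_\mathfrak{m}\!\left(\tfrac{R}{I_1 \cap I_2}\right) \to H^i_\mathfrak{m}\!\left(\tfrac{R}{I_1}\right) \oplus H^i_\mathfrak{m}\!\left(\tfrac{R}{I_2}\right) \to H^i_\mathfrak{m}\!\left(\tfrac{R}{I_1+I_2}\right) \to \cdots,
\end{equation*}
in which the connecting maps $\delta$ are the canonical ones.

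As for difficulty, there is essentially no serious obstacle: once the short exact sequence is in hand, the long exact sequence is a purely formal consequence of the homological algebra of derived functors, and the locality of $(R,\mathfrak{m})$ plays no structural role. The only place to exercise care is the verification of exactness in the middle of the short exact sequence, where one must exhibit the common lift $r$ explicitly; everything else follows automatically from the general theory.
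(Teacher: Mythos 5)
Your proposal is correct: the explicit verification of the short exact sequence (including the common-lift argument $r = a - i_1 = b + i_2$ for exactness in the middle) and the passage to the long exact sequence via the $\delta$-functor structure of $H^i_\mathfrak{m}(-)$ together constitute the standard, complete proof of this statement. The paper itself records this theorem as a well-known fact and offers no proof, so there is no alternative argument to compare against; your write-up simply supplies the routine details the authors omit, and you are also right that the locality of $(R,\mathfrak{m})$ plays no essential role beyond fixing the support ideal $\mathfrak{m}$.
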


\begin{lem} \label{sum}
Let $I_1, I_2$ be ideals in a commutative Noetherian local ring $(R, \mathfrak{m})$ such that
$$\depth \tfrac {R}{I_1} \geq \depth \tfrac{R}{I_2} > \depth \tfrac{R}{I_1+I_2}.$$
Then, $\depth \frac{R}{I_1 \cap I_2} = 1+ \depth \frac{R}{I_1+I_2}.$
\end{lem}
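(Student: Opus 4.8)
The plan is to convert the statement about depth into one about local cohomology and then run the Mayer--Vietoris long exact sequence of Theorem~\ref{Mayer-Vietoris sequence}. The essential tool is Theorem~\ref{Grothendieck}: for a finitely generated module $M$ the depth is precisely the least integer $i$ with $H^i_\mathfrak{m}(M) \neq 0$, so that $H^i_\mathfrak{m}(M) = 0$ for $i < \depth M$ while $H^{\depth M}_\mathfrak{m}(M) \neq 0$. Abbreviate $a = \depth R/I_1$, $b = \depth R/I_2$, and $c = \depth R/(I_1+I_2)$, so the hypothesis reads $a \geq b > c$. I want to show $\depth R/(I_1 \cap I_2) = c+1$, equivalently that $H^i_\mathfrak{m}(R/(I_1 \cap I_2))$ vanishes for $i \leq c$ and is nonzero for $i = c+1$.

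First I would establish the lower bound $\depth R/(I_1 \cap I_2) \geq c+1$ by proving the cohomology vanishes in degrees $i \leq c$. Fix such an $i$. In the Mayer--Vietoris sequence the term $H^i_\mathfrak{m}(R/(I_1 \cap I_2))$ is flanked by $H^{i-1}_\mathfrak{m}(R/(I_1+I_2))$ on the left and $H^i_\mathfrak{m}(R/I_1) \oplus H^i_\mathfrak{m}(R/I_2)$ on the right. Since $i-1 < c$, the left term is zero by Grothendieck; since $i \leq c < b \leq a$, both summands on the right vanish as well. Trapped between two zero modules, $H^i_\mathfrak{m}(R/(I_1 \cap I_2))$ must itself be zero.

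Next I would produce the matching nonvanishing at degree $c+1$ using the segment
\[
H^c_\mathfrak{m}(R/I_1) \oplus H^c_\mathfrak{m}(R/I_2) \longrightarrow H^c_\mathfrak{m}(R/(I_1+I_2)) \longrightarrow H^{c+1}_\mathfrak{m}(R/(I_1 \cap I_2)).
\]
Because $c < b \leq a$, the left-hand module vanishes, so the connecting homomorphism out of $H^c_\mathfrak{m}(R/(I_1+I_2))$ is injective. As $c = \depth R/(I_1+I_2)$, Theorem~\ref{Grothendieck} guarantees $H^c_\mathfrak{m}(R/(I_1+I_2)) \neq 0$, and injectivity then forces $H^{c+1}_\mathfrak{m}(R/(I_1 \cap I_2)) \neq 0$. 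Hence $\depth R/(I_1 \cap I_2) \leq c+1$, and together with the previous paragraph this gives equality.

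The argument is fundamentally a diagram chase, so there is no deep obstacle; the only delicate point is the index bookkeeping, and it is worth checking that each inequality in the hypothesis is used exactly where needed. The strict inequality $b > c$ is precisely what makes both $H^c_\mathfrak{m}(R/I_1)$ and $H^c_\mathfrak{m}(R/I_2)$ vanish, which is what renders the connecting map injective at degree $c$; the inequality $a \geq b$ then ensures that $R/I_1$ never contributes cohomology in a degree where $R/I_2$ does not. Were the strict inequality weakened to $b \geq c$, the source of the connecting map could be nonzero and injectivity could fail, so tracking these bounds carefully is the crux of the proof.
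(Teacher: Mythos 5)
Your proof is correct and follows essentially the same route as the paper: translate depth into vanishing of local cohomology via Theorem~\ref{Grothendieck}, use the Mayer--Vietoris sequence to sandwich $H^i_\mathfrak{m}(R/(I_1\cap I_2))$ between zero modules for $i\leq \depth R/(I_1+I_2)$, and then use injectivity of the connecting map out of the nonzero module $H^{c}_\mathfrak{m}(R/(I_1+I_2))$ to get nonvanishing in degree $c+1$. The paper's proof is a more compressed version of exactly this index bookkeeping.
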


\begin{proof}
Let $r := 1+ \depth R/(I_1+I_2)$. Then, for all $i < r$,
$$H_\mathfrak{m}^{i-1} \left(\tfrac{R}{I_1+I_2}\right)= H_\mathfrak{m}^{i} \left(\tfrac{R}{I_1}\right) = H_\mathfrak{m}^{i} \left(\tfrac{R}{I_2}\right)=0.$$
Hence by the Mayer-Vietoris exact sequence,
\begin{equation*}
\begin{split}
\cdots \rightarrow H_\mathfrak{m}^{i-1} \left(\tfrac{R}{I_1}\right) \oplus H_\mathfrak{m}^{i-1} \left(\tfrac{R}{I_2}\right) \rightarrow  H_\mathfrak{m}^{i-1} \left(\tfrac{R}{I_1+I_2}\right) \rightarrow H_\mathfrak{m}^i \left(\tfrac{R}{I_1 \cap I_2}\right)  \rightarrow H_\mathfrak{m}^i \left(\tfrac{R}{I_1}\right) \oplus H_\mathfrak{m}^i \left(\tfrac{R}{I_2}\right) \rightarrow \cdots
\end{split}
\end{equation*}
we have $H_\mathfrak{m}^i \left(\tfrac{R}{I_1 \cap I_2}\right) =0$ for all $i < r$, and  $H_\mathfrak{m}^{r}  \left(\tfrac{R}{I_1 \cap I_2}\right) \neq 0$.
So that
$$\depth \frac{R}{I_1 \cap I_2} = r = 1+ \depth \frac {R}{I_1+I_2}.$$
\end{proof}

\begin{lem} \label{Marcel}
Let $I, I_1, I_2$ be ideals in a commutative Noetherian local ring $(R, \mathfrak{m})$ such that $I=I_1+I_2$ and
$$r:= \depth \frac{R}{I_1 \cap I_2} \leq \depth \frac{R}{I_2}.$$
Then, for all $i<r-1$ one has
\begin{equation*}
H_\mathfrak{m}^i \left( \frac{R}{I_1} \right) \cong H_\mathfrak{m}^i \left( \frac{R}{I} \right).
\end{equation*}
\end{lem}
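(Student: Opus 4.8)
The plan is to apply the Mayer--Vietoris long exact sequence of Theorem~\ref{Mayer-Vietoris sequence} to the pair $I_1, I_2$, exploiting the hypothesis $I = I_1 + I_2$ so that the terms $H_\mathfrak{m}^i(R/(I_1+I_2))$ appearing in that sequence are exactly $H_\mathfrak{m}^i(R/I)$. The isomorphism we want should then drop out of the segment
\[
H_\mathfrak{m}^i\!\left(\tfrac{R}{I_1 \cap I_2}\right) \to H_\mathfrak{m}^i\!\left(\tfrac{R}{I_1}\right) \oplus H_\mathfrak{m}^i\!\left(\tfrac{R}{I_2}\right) \to H_\mathfrak{m}^i\!\left(\tfrac{R}{I}\right) \to H_\mathfrak{m}^{i+1}\!\left(\tfrac{R}{I_1 \cap I_2}\right),
\]
once we show that the two flanking terms and the second summand in the middle all vanish in the range $i < r-1$.

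The vanishing is supplied by Grothendieck's Theorem~\ref{Grothendieck}, which tells us that $H_\mathfrak{m}^j(M) = 0$ whenever $j < \depth(M)$. I would record three consequences of this. First, since $\depth R/(I_1 \cap I_2) = r$ and $i < r-1 < r$, we get $H_\mathfrak{m}^i(R/(I_1 \cap I_2)) = 0$. Second, since $i < r-1$ gives $i+1 < r$, the same vanishing yields $H_\mathfrak{m}^{i+1}(R/(I_1 \cap I_2)) = 0$. Third, the hypothesis $r \le \depth R/I_2$ combined with $i < r-1 < r \le \depth R/I_2$ forces $H_\mathfrak{m}^i(R/I_2) = 0$.

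Feeding these three vanishings into the displayed segment collapses it to
\[
0 \longrightarrow H_\mathfrak{m}^i\!\left(\tfrac{R}{I_1}\right) \longrightarrow H_\mathfrak{m}^i\!\left(\tfrac{R}{I}\right) \longrightarrow 0,
\]
from which the desired isomorphism $H_\mathfrak{m}^i(R/I_1) \cong H_\mathfrak{m}^i(R/I)$ follows at once for every $i < r-1$.

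There is no substantive obstacle here beyond careful index bookkeeping: the only point requiring attention is to verify that the single inequality $i < r-1$ simultaneously produces all three vanishings, in particular that the shift from $i$ to $i+1$ in the last term is exactly absorbed by passing from $r$ to $r-1$. I would present the three vanishing statements explicitly before invoking exactness, so that the collapse of the sequence is transparent.
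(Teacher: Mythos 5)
Your proof is correct and is essentially identical to the paper's: both establish the vanishing of $H_\mathfrak{m}^i(R/(I_1\cap I_2))$, $H_\mathfrak{m}^{i+1}(R/(I_1\cap I_2))$, and $H_\mathfrak{m}^i(R/I_2)$ for $i<r-1$ via the depth hypotheses (Grothendieck vanishing) and then read off the isomorphism from the Mayer--Vietoris sequence. Your version merely makes the index bookkeeping more explicit than the paper does.
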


\begin{proof}

For $i<r-1$, our assumption implies that
\begin{equation*}
H_\mathfrak{m}^i \left(\tfrac{R}{I_1 \cap I_2}\right) = H_\mathfrak{m}^i \left(\tfrac{R}{I_2}\right) = H_\mathfrak{m}^{i+1} \left(\tfrac{R}{I_1 \cap I_2}\right) = 0.
\end{equation*}
Hence, from the Mayer-Vietoris exact sequence
\begin{equation*}
\begin{split}
\cdots \longrightarrow H_\mathfrak{m}^i \left(\tfrac{R}{I_1 \cap I_2}\right) \longrightarrow H_\mathfrak{m}^i \left(\tfrac{R}{I_1}\right) \oplus H_\mathfrak{m}^i \left(\tfrac{R}{I_2}\right) \longrightarrow H_\mathfrak{m}^i \left(\tfrac{R}{I}\right)
\longrightarrow H_\mathfrak{m}^{i+1} \left(\tfrac{R}{I_1 \cap I_2}\right) \longrightarrow \cdots.
\end{split}
\end{equation*}
we have
\begin{equation*}
H_\mathfrak{m}^i \left( \frac{R}{I_1} \right) \cong H_\mathfrak{m}^i \left( \frac{R}{I} \right), \qquad \text{for all } i<r-1,
\end{equation*}
as desired.
\end{proof}

\noindent \textbf{Notation.} Let for $n>3$, $T_{1,n}, T'_{1,n} \subset S= K[x_1, \ldots, x_n]$ denote the ideals
\begin{equation*}
T_{1,n} = \bigcap\limits_{2 \leq i < j \leq n} (x_1, x_i, x_j), \ \ \ \
T'_{1,n}= \bigcap\limits_{2 \leq i < j \leq n} (x_i, x_j).
\end{equation*}

\begin{prop}\label{T1n}
For $n \geq 3$, let $S= K[x_1, \ldots, x_n]$ be the polynomial ring, then
\begin{itemize}
\item[\rm (i)] $T'_{1,n}= \bigg( \prod\limits_{\substack{2 \leq i  \leq n \atop i \neq 2}}x_i, \ldots, \prod\limits_{\substack{2 \leq i \leq n \atop i \neq n}}x_i \bigg)$ \ \ and \ \  $T_{1,n} = \bigg( x_1, \prod\limits_{\substack{2 \leq i  \leq n \atop i \neq 2}}x_i, \ldots, \prod\limits_{\substack{2 \leq i \leq n \atop i \neq n}}x_i \bigg)$.
\item[\rm (ii)] $\dfrac{S}{T'_{1,n}}$  (res. $\dfrac{S}{T_{1,n}}$) is Cohen-Macaulay of dimension $n-2$ (res. $n-3$).
\end{itemize}
\end{prop}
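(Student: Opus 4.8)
The plan is to handle part (i) by a direct monomial computation and then bootstrap part (ii) from it via the Eagon--Reiner correspondence. For (i), I would first describe the monomials lying in each intersection. A monomial $m$ belongs to $T'_{1,n}=\bigcap_{2\le i<j\le n}(x_i,x_j)$ precisely when, for every pair $i<j$ in $\{2,\dots,n\}$, one has $x_i\mid m$ or $x_j\mid m$; that is, the support of $m$ meets every edge of the complete graph on $\{2,\dots,n\}$, so it is a vertex cover of that graph. The minimal covers are the complements of single vertices, which yields exactly the generators $\prod_{i\neq k}x_i$. Equivalently, one recognizes $T'_{1,n}$ as the Alexander dual of the edge ideal $J$ of the complete graph on $\{2,\dots,n\}$ (isomorphic to $I(\C_{n-1,2})$), whose dual is its cover ideal. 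For $T_{1,n}$ I would use $(x_1,x_i,x_j)=(x_1)+(x_i,x_j)$ together with the distributive identity $\bigcap_{i<j}\big((x_1)+(x_i,x_j)\big)=(x_1)+\bigcap_{i<j}(x_i,x_j)=(x_1)+T'_{1,n}$. The inclusion $\supseteq$ is immediate; for $\subseteq$ one takes a monomial $m$ in the left-hand side and splits according to whether $x_1\mid m$ (then $m\in(x_1)$) or not (in which case each containment forces $m\in(x_i,x_j)$, so $m\in T'_{1,n}$). This produces both generating sets.

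For (ii), the central object is $A:=K[x_2,\dots,x_n]/T'$, where $T'=\bigcap_{2\le i<j\le n}(x_i,x_j)$ is computed in the polynomial ring on the $n-1$ variables $x_2,\dots,x_n$. Since $T'=J^\vee$ with $J$ the edge ideal of the complete graph on $\{2,\dots,n\}$, and $J$ has $2$-linear resolution by Example~\ref{linearity of C_n,d}, Theorem~\ref{Eagon-Reiner} (Eagon--Reiner) shows that $A$ is Cohen--Macaulay of dimension $(n-1)-2=n-3$. I would then transport this to $S$ in two ways. Because $T'_{1,n}$ does not involve the variable $x_1$, the quotient $S/T'_{1,n}$ is the polynomial extension $A[x_1]$, hence Cohen--Macaulay of dimension $(n-3)+1=n-2$. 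Because $x_1\in T_{1,n}$ and $T_{1,n}=(x_1)+T'_{1,n}$ by part (i), reduction modulo $x_1$ gives an isomorphism $S/T_{1,n}\cong A$, so $S/T_{1,n}$ is Cohen--Macaulay of dimension $n-3$.

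The genuinely non-formal step is the distributive identity $\bigcap_{i<j}\big((x_1)+(x_i,x_j)\big)=(x_1)+T'_{1,n}$ used in part (i), which fails for arbitrary ideals and must be justified from the monomial structure: either by the monomial-membership criterion above, or by noting that $(x_1)+(x_i,x_j)$ is the preimage of $(x_i,x_j)$ under reduction modulo $x_1$ and that such preimages commute with intersection. Once the generating sets are established, part (ii) is essentially bookkeeping; the only point needing care is tracking the ambient ring when passing between $S=K[x_1,\dots,x_n]$ and $K[x_2,\dots,x_n]$, so that the Eagon--Reiner dimension count is applied in the ring with the correct number of variables.
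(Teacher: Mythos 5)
Your proof is correct, but note that the paper itself does not actually argue this proposition: its ``proof'' consists of declaring the assertion well known and pointing to \cite[Example 7]{Morales} for the primary decomposition of the Alexander dual of $T'_{1,n}$. Your write-up is therefore a genuinely different, namely self-contained, route. The identities in (i) go through exactly as you say: both sides are monomial ideals (finite sums and intersections of monomial ideals are again monomial), so equality may be tested on monomials, where membership in $T'_{1,n}$ is the vertex-cover condition for the complete graph on $\{2,\dots,n\}$ whose minimal covers are the $(n-2)$-element subsets; and the distributivity $\bigcap_{i<j}\big((x_1)+(x_i,x_j)\big)=(x_1)+T'_{1,n}$, which you rightly single out as the one non-formal step, is settled by the case split on whether $x_1$ divides the monomial. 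For (ii), deducing that $K[x_2,\dots,x_n]/T'$ is Cohen--Macaulay of dimension $n-3$ from the linearity of $I(\C_{n-1,2})$ (Example~\ref{linearity of C_n,d}) via Theorem~\ref{Eagon-Reiner}, and then transporting along the polynomial extension $S/T'_{1,n}\cong \big(K[x_2,\dots,x_n]/T'\big)[x_1]$ and the reduction $S/T_{1,n}\cong K[x_2,\dots,x_n]/T'$, is sound; your insistence on tracking the number of ambient variables is precisely where a careless application of Eagon--Reiner would produce the wrong dimension. What your approach buys is a derivation entirely from statements already quoted in this paper rather than from an external computation in \cite{Morales}; if you want to shorten it further, you could bypass Eagon--Reiner by observing that $K[x_2,\dots,x_n]/T'$ is the Stanley--Reisner ring of the $(n-4)$-skeleton of the simplex on $n-1$ vertices, hence Cohen--Macaulay by the same \cite[Exercise 5.1.23]{BH} already invoked in Example~\ref{linearity of C_n,d}.
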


\begin{proof}
The assertion is well-known but one can find the a direct proof of the primary decomposition of the Alexander dual of $T'_{1,n}$ in {\cite[Example 7]{Morales}}.
\end{proof}

Let $\C$ be a $3$-uniform clutter on the vertex set $[n]$. Surely, one can consider $\C$ as a $3$-uniform clutter on $[m]$ for any $m \geq n$. However, $\bar{\C}$ (and hence $I(\bar{\C})$) will be changed when we consider $\C$ either on $[n]$ or on $[m]$. To be more precise, when we pass from $[n]$ to $[n+1]$, then the new generators $\{x_{n+1}x_ix_j \colon 1 \leq i<j \leq n \}$ will be added to $I(\bar{\C})$. Below, we will show that the linearity does not change when we pass from $[n]$ to $[m]$.

\begin{lem} \label{redundant vertex}
Let $I \subset K[x_1, \ldots, x_n]$ be a square-free monomial ideal generated in degree $3$ such that $x_1x_ix_j \in I$ for all $1 < i < j \leq n$. If $J= I \cap K[x_2, \ldots, x_n]$, then $\reg (I) = \reg (J)$.
\end{lem}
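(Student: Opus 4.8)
The plan is to pass to Alexander duals and reduce the whole statement to a single application of the depth formula in Lemma~\ref{sum}. Write $A = (x_1x_ix_j \colon 1 < i < j \le n)$ for the ideal generated by the circuits through $x_1$. By hypothesis these are \emph{all} of the minimal generators of $I$ involving $x_1$, so $I = A + J$, where $J$ is generated by the degree-$3$ squarefree monomials of $I$ not involving $x_1$. Since Alexander duality carries sums to intersections, $I^\vee = A^\vee \cap J^\vee$, and $A^\vee = \bigcap_{1<i<j\le n}(x_1,x_i,x_j) = T_{1,n}$. Here $J^\vee$ denotes the dual computed in $S$, an intersection of height-$3$ primes $P_N=(x_p,x_q,x_r)$ none of which involves $x_1$. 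By Remark~\ref{Passing to regularity} (Terai's duality), both regularities equal $n-\depth S/(\cdot)^\vee$, and $\reg(J)$ is unchanged when $J$ is viewed in $S$; so it suffices to prove the depth equality $\depth S/I^\vee = \depth S/J^\vee$.

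First I would record the two depths that feed into Lemma~\ref{sum}. By Proposition~\ref{T1n}, $\depth S/T_{1,n} = n-3$. On the other hand $J^\vee=\bigcap_N P_N$ is pure of height $3$, so $\dim S/J^\vee = n-3$ and hence $\depth S/J^\vee \le n-3 = \depth S/T_{1,n}$.

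The key computation is the sum $T_{1,n}+J^\vee$. Using $T_{1,n} = (x_1)+T'_{1,n}$ from Proposition~\ref{T1n}(i), I would prove $T'_{1,n} \subseteq J^\vee$, so that $T_{1,n}+J^\vee = (x_1)+J^\vee$. Indeed, a monomial in $T'_{1,n}=\bigcap_{2\le i<j\le n}(x_i,x_j)$ lies in every $(x_i,x_j)$, so its support omits at most one of $x_2,\dots,x_n$; it is therefore divisible by at least two of any three variables $x_p,x_q,x_r$ and hence lies in each generator-prime $P_N=(x_p,x_q,x_r)$ of $J^\vee$. Since $J^\vee$ involves no $x_1$, the variable $x_1$ is a nonzerodivisor on $S/J^\vee$, giving
$$\depth \frac{S}{T_{1,n}+J^\vee} = \depth \frac{S}{(x_1)+J^\vee} = \depth \frac{S}{J^\vee} - 1.$$

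Finally I would invoke Lemma~\ref{sum} with $I_1 = T_{1,n}$ and $I_2 = J^\vee$: the chain
$$\depth \tfrac{S}{T_{1,n}} = n-3 \ \ge\ \depth \tfrac{S}{J^\vee} \ >\ \depth \tfrac{S}{J^\vee} - 1 = \depth \tfrac{S}{T_{1,n}+J^\vee}$$
verifies its hypotheses, whence
$$\depth \frac{S}{I^\vee} = \depth \frac{S}{T_{1,n}\cap J^\vee} = 1 + \depth \frac{S}{T_{1,n}+J^\vee} = \depth \frac{S}{J^\vee},$$
and therefore $\reg(I)=\reg(J)$. The main obstacle is the containment $T'_{1,n}\subseteq J^\vee$ and the resulting identification of $T_{1,n}+J^\vee$ with $(x_1)+J^\vee$: this is precisely where the hypothesis that \emph{every} circuit $x_1x_ix_j$ lies in $I$ is used, since it forces $A^\vee$ to be the full $T_{1,n}$ and makes the support-covering argument go through. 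A minor point to treat separately is the degenerate case $J=0$ (equivalently $I=A$), where the statement must be read directly rather than through the depth equality.
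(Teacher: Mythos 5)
Your proposal is correct and follows essentially the same route as the paper: decompose $I^\vee = T_{1,n}\cap J^\vee$, show $T'_{1,n}\subseteq J^\vee$ so that $T_{1,n}+J^\vee=(x_1)+J^\vee$ and $\depth S/(T_{1,n}+J^\vee)=\depth S/J^\vee-1$, then apply Lemma~\ref{sum}. You supply a few details the paper leaves implicit (the support argument for $T'_{1,n}\subseteq J^\vee$, the purity of $J^\vee$ giving $\depth S/J^\vee\le n-3$, and the degenerate case $J=0$), but the argument is the same.
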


\begin{proof}
By our assumption, $J$ is an ideal of $K[x_2, \ldots, x_n]$ and
$$I = J + (x_1x_ix_j \colon 1 < i < j \leq n).$$
So that $I^\vee = J^\vee \bigcap T_{1,n}$. By Remark~\ref{Passing to regularity}, it is enough to show that $\depth S/I^\vee = \depth S/J^\vee$.

The ideal $J^\vee$ is intersection of some primes $P$, such that the set of generators of $P$ is a $3$-subset of $\{x_1, \ldots, x_n \}$.
So that for all $j$, $\prod\limits_{\substack{1<i  \leq n-1 \atop i \neq j}}x_i \in J^\vee$. Hence $J^\vee + T_{1,n} = (x_1, J^\vee)$ by Proposition~\ref{T1n}(i). In particular
\begin{equation} \label{a}
\depth \frac{S}{J^\vee + T_{1,n}} = \depth \frac{S}{J^\vee}- 1.
\end{equation}
By Proposition~\ref{T1n} and (\ref{a}), $\depth \frac{S}{T_{1,n}} \geq \depth \frac{S}{J^\vee} > \depth \frac{S}{J^\vee + T_{1,n}}$. Hence by Lemma~\ref{sum} and (\ref{a}), we have
$$\depth \frac{S}{I^\vee} = 1+\depth \frac{S}{J^\vee + T_{1,n}} = \depth \frac{S}{J^\vee}.$$
\end{proof}

\begin{thm}\label{simplicial edge}
Let $\C\neq \C_{n,d}$ be a $d$-uniform clutter on $[n]$ and $e$ be a simplicial submaximal circuit. Let
\begin{equation*}
\C' = \C \setminus e = \big\{ F \in \C \colon e \nsubseteq F \big\}
\end{equation*}
and $I=I(\bar{\C}), J=I(\bar{\C}')$. Then, $\reg (I) = \reg (J)$.
\end{thm}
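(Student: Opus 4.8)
The goal is to show that removing all circuits through a simplicial submaximal circuit $e$ does not change the regularity of the circuit ideal. By Remark~\ref{Passing to regularity}, this reduces to proving that $\depth S/I^\vee = \depth S/J^\vee$, so the plan is to relate the two Alexander duals via the combinatorics of $e$ being simplicial.

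\medskip

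\noindent\textbf{Setting up the two duals.} First I would write $e = \{v_1, \dots, v_{d-1}\}$ and let $\mathrm{N}[e] = e \cup \{c_1, \dots, c_s\}$ be the neighborhood, which is a clique in $\C$ by hypothesis. Passing from $\C$ to $\C' = \C \setminus e$ removes exactly the circuits $e \cup \{c_t\}$ for $t = 1, \dots, s$, i.e.\ on the ideal side $I(\bar{\C}')$ acquires the extra generators $\textbf{x}_{e}x_{c_t}$ compared with $I$. On the dual side the effect is cleaner: $J^\vee = I^\vee \cap \bigcap_{t=1}^s P_{e \cup \{c_t\}}$, where each $P_{e\cup\{c_t\}}$ is the prime generated by the variables indexed by $e$ together with $x_{c_t}$. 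The main combinatorial input — that $\mathrm{N}[e]$ is a clique — should let me control the intersection $\bigcap_t P_{e\cup\{c_t\}}$ and its interaction with $I^\vee$, exactly as in the proof of Lemma~\ref{redundant vertex}, where the ideal $T_{1,n}$ played the role of the correction term.

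\medskip

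\noindent\textbf{The depth comparison via Mayer--Vietoris.} The heart of the argument is to split $J^\vee = I^\vee \cap Q$, where $Q = \bigcap_{t=1}^s P_{e \cup \{c_t\}}$, and apply the homological lemmas. I would compute the sum $I^\vee + Q$ and show, using that $\mathrm{N}[e]$ is a clique so that $\textbf{x}_{[n]\setminus \mathrm{N}[e]}$-type monomials lie in $I^\vee$ (Remark~\ref{clique}), that $I^\vee + Q$ has the form $(\textbf{x}_e) + I^\vee$ up to a predictable depth shift, giving the analogue of equation~(\ref{a}): $\depth S/(I^\vee + Q) = \depth S/I^\vee - (d-1)$ or a similar explicit drop. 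Granting the depth inequalities $\depth S/Q \geq \depth S/I^\vee > \depth S/(I^\vee + Q)$, Lemma~\ref{sum} then yields $\depth S/J^\vee = \depth S/(I^\vee \cap Q) = 1 + \depth S/(I^\vee + Q)$, and chasing the shift back should collapse this to $\depth S/I^\vee$, exactly the equality we need. The hypothesis $\C \neq \C_{n,d}$ guarantees $\bar{\C}$ is nonempty so that $I^\vee$ is a proper ideal and these depth quantities are well-defined and comparable.

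\medskip

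\noindent\textbf{Main obstacle.} The delicate step is identifying the structure of $Q = \bigcap_t P_{e\cup\{c_t\}}$ and its sum with $I^\vee$ precisely enough to pin down the depth shift, since $Q$ is a genuine intersection of several primes sharing the common variables $\textbf{x}_e$ rather than the single clean family treated in Proposition~\ref{T1n}. I expect that factoring out the common $\textbf{x}_e$ and invoking Lemma~\ref{union of clutters 1} (to strip a monomial factor without changing depth) together with Proposition~\ref{T1n}-style primary decompositions will reduce $Q$ to a Cohen--Macaulay quotient of controlled dimension, after which verifying the two depth inequalities needed for Lemma~\ref{sum} becomes a bookkeeping matter. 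A secondary subtlety is confirming that no generator of $I^\vee$ is lost or absorbed when forming the sum, which is where the clique condition on $\mathrm{N}[e]$ must be used in full, not merely the fact that $e$ is a submaximal circuit.
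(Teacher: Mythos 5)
Your reduction to $\depth S/I^\vee=\depth S/J^\vee$ and your identification $J^\vee=I^\vee\cap Q$ with $Q=\bigcap_{c\in N[e]\setminus e}P_{e\cup\{c\}}$ are both correct, but the plan to finish with Lemma~\ref{sum} breaks down, exactly at the point you flag as the main obstacle. Write $e=\{1,\dots,d-1\}$ and $N[e]=\{1,\dots,r\}$. Then $Q=(x_1,\dots,x_{d-1},x_d\cdots x_r)$, and since $Q$ contains $(x_1,\dots,x_{d-1})$ it swallows all of the interesting part of $I^\vee$: one gets $I^\vee+Q=(x_1,\dots,x_{d-1},\,x_d\cdots x_r,\,x_{r+1}\cdots x_n)$, whose quotient is Cohen--Macaulay of dimension $n-d-1$ \emph{regardless of} $\depth S/I^\vee$. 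So the hoped-for analogue of equation~(\ref{a}) --- a depth drop by a fixed amount below $\depth S/I^\vee$ --- is false, and the hypothesis $\depth S/I^\vee>\depth S/(I^\vee+Q)$ of Lemma~\ref{sum} holds only when $\depth S/I^\vee=n-d$, i.e.\ only when $S/I^\vee$ is already Cohen--Macaulay and $I$ has a linear resolution. A bare Mayer--Vietoris argument on $J^\vee=I^\vee\cap Q$ still yields $\depth S/J^\vee\ge\depth S/I^\vee$, and equality when $\depth S/I^\vee<n-d-1$, but in the borderline case $\depth S/I^\vee=n-d-1$ the map $H^{n-d-1}_\mathfrak{m}(S/I^\vee)\to H^{n-d-1}_\mathfrak{m}(S/(I^\vee+Q))$ could a priori be injective and the equality is not forced. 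That case cannot be discarded: the theorem feeds into Corollary~\ref{Flip} and then into Corollary~\ref{polytope}(ii), where it is applied to sphere triangulations whose dual rings have depth exactly $n-d-1$.

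The paper sidesteps this with a different, symmetric decomposition. Using the modular law (Lemma~\ref{Moduli sum rule}) and Remark~\ref{clique}, it writes both duals over the same core ideal $A=(x_1,\dots,x_{d-1})\cap\bigl(\bigcap_{F\in\bar{\C},\,e\nsubseteq F}P_F\bigr)$, namely $I^\vee=A+(x_{r+1}\cdots x_n)$ and $J^\vee=A+(x_d\cdots x_n)$, and then applies Lemma~\ref{Marcel} (not Lemma~\ref{sum}) twice: each of the intersections $A\cap(x_{r+1}\cdots x_n)$ and $A\cap(x_d\cdots x_n)$ equals $(x_1,\dots,x_{d-1})$ times a monomial in disjoint variables and hence has depth $n-d+1$, so $H^i_\mathfrak{m}(S/I^\vee)\cong H^i_\mathfrak{m}(S/A)\cong H^i_\mathfrak{m}(S/J^\vee)$ for all $i<n-d$, which pins down both depths simultaneously with no Cohen--Macaulayness assumption. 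If you want to keep your decomposition you would have to control that one connecting map by hand in the case $\depth S/I^\vee=n-d-1$; switching to the paper's two-sided comparison through $A$ is the clean fix.
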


\begin{proof}
By Remark~\ref{Passing to regularity}, it is enough to show that $\depth S/I^\vee = \depth S/J^\vee$. Without loss of generality, we may assume that $e =\{1, \ldots, d-1\}$ and $N[e]=\{1, \ldots, r\}$.

Since $e=\{1,\ldots, d-1\}$ is a simplicial submaximal circuit, by Remark~\ref{clique} and Lemma~\ref{Moduli sum rule}, we have:
\begin{align*}
I^\vee & = (x_1, \ldots, x_{d-1}, x_{r+1} \cdots x_n) \cap \bigg( \bigcap\limits_{F \in \bar{\C} \atop \{1,\ldots, d-1\} \nsubseteq F} P_F \bigg) \\
& = \bigg[ (x_1, \ldots, x_{d-1}) \cap \big( \bigcap\limits_{F \in \bar{\C} \atop \{1,\ldots, d-1\} \nsubseteq F} P_F \big) \bigg] + (x_{r+1} \cdots x_n),\\
J^\vee  & = (x_1, \ldots, x_{d-1}, x_{d} \cdots x_n) \cap \bigg( \bigcap\limits_{F \in \bar{\C} \atop \{1,\ldots, d-1\} \nsubseteq F} P_F \bigg) \\
& = \bigg[ (x_1, \ldots, x_{d-1}) \cap \big( \bigcap\limits_{F \in \bar{\C} \atop \{1,\ldots, d-1\} \nsubseteq F} P_F \big) \bigg] + (x_{d} \cdots x_n).
\end{align*}
Since
\begin{align*}
& (x_1, \ldots, x_{d-1}) \cap \bigg( \bigcap\limits_{F \in \bar{\C} \atop \{1,\ldots, d-1\} \nsubseteq F} P_F \bigg) \cap (x_{r+1} \cdots x_n) = (x_1x_{r+1} \cdots x_n, \ldots, x_{d-1}x_{r+1} \cdots x_n),\\
& (x_1, \ldots, x_{d-1}) \cap \bigg( \bigcap\limits_{F \in \bar{\C} \atop \{1,\ldots, d-1\} \nsubseteq F} P_F \bigg) \cap (x_{d} \cdots x_n) = (x_1x_{d} \cdots x_n, \ldots, x_{d-1}x_{d} \cdots x_n)
\end{align*}
have depth equal to $n-(d-1)$, by Lemma~\ref{Marcel} we have:
\begin{align} \label{Eq1}
H_\mathfrak{m}^i \left( \frac{S}{I^\vee} \right) \cong H_\mathfrak{m}^i \bigg( \frac{S}{(x_1, \ldots, x_{d-1}) \cap \big( \bigcap\limits_{F \in \bar{\C} \atop \{1,\ldots, d-1\} \nsubseteq F} P_F \big)} \bigg) \cong H_\mathfrak{m}^i \left( \frac{S}{J^\vee} \right)\quad \text{for all }  i<n-d.
\end{align}
Since $\dim S/I^\vee = \dim S/J^\vee = n-d$, the above equation implies that $\depth S/I^\vee = \depth S/J^\vee$.
\end{proof}

For a $d$-uniform clutter $\C$, if there exist only one circuit $F \in \C$ which contains the submaximal circuit $e \in E(\C)$, then clearly $e$ is a simplicial submaximal circuit. Hence we have the following result.

\begin{cor}\label{one vertex}
Let $\C$ be a $d$-uniform clutter on $[n]$ and $I=I(\bar{\C})$ be the circuit ideal. If $F$ is the only circuit containing the submaximal circuit $e$, then $\reg (I)= \reg (I+ {\text {\rm \textbf{x}}}_F)$.
\end{cor}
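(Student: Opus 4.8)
The plan is to realize this corollary as a direct instance of Theorem~\ref{simplicial edge}. The first step is to translate the ideal operation $I \mapsto I + \textbf{x}_F$ into a clutter operation. Since $F \in \C$, we have $F \notin \bar{\C}$, so $\textbf{x}_F$ is not among the generators of $I = I(\bar{\C})$; because $I$ is generated by the square-free degree-$d$ monomials $\textbf{x}_G$ with $G \in \bar{\C}$, adjoining $\textbf{x}_F$ merely enlarges $\bar{\C}$ by the single set $F$. Setting $\C' = \C \setminus \{F\}$, one checks that $\bar{\C'} = \C_{n,d} \setminus \C' = \bar{\C} \cup \{F\}$, whence $I(\bar{\C'}) = I + (\textbf{x}_F)$. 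Thus it suffices to prove $\reg I(\bar{\C}) = \reg I(\bar{\C'})$.

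Next I would verify that $e$ is a simplicial submaximal circuit and that $\C' = \C \setminus e$. Write $F = e \cup \{c_0\}$. By hypothesis the only circuit of $\C$ containing $e$ is $F$, so $\{c \in [n] \colon e \cup \{c\} \in \C\} = \{c_0\}$, and hence $N[e] = e \cup \{c_0\} = F$. As $|F| = d$, its unique $d$-subset is $F$ itself, which lies in $\C$; therefore $N[e]$ is a clique and $e$ is simplicial. Moreover $\C \setminus e = \{G \in \C \colon e \nsubseteq G\} = \C \setminus \{F\} = \C'$, since $F$ is the sole circuit through $e$.

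Finally, applying Theorem~\ref{simplicial edge} to the simplicial submaximal circuit $e$ gives $\reg I(\bar{\C}) = \reg I(\overline{\C \setminus e}) = \reg I(\bar{\C'})$, which by the first step equals $\reg (I + \textbf{x}_F)$, as desired. The only point demanding care is the hypothesis $\C \neq \C_{n,d}$ required to invoke Theorem~\ref{simplicial edge}; this is automatic whenever $I \neq 0$, since the uniqueness of $F$ forces $\C \neq \C_{n,d}$ except in the degenerate case $n = d$. There is no substantial obstacle here: the mathematical content is carried entirely by Theorem~\ref{simplicial edge}, and the remaining work is the bookkeeping that identifies ``removing the unique circuit through $e$'' with the operation $\C \setminus e$ and ``adding the generator $\textbf{x}_F$'' with passing to the complement of that modified clutter.
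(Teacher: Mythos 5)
Your proposal is correct and follows exactly the route the paper takes: the paper derives this corollary from Theorem~\ref{simplicial edge} by observing that when $F$ is the only circuit containing $e$, one has ${\rm N}[e]=F$, which is trivially a clique, so $e$ is simplicial, and removing $e$ from $\C$ amounts to adjoining $\textbf{x}_F$ to $I(\bar{\C})$. Your additional checks (that $I(\overline{\C\setminus e})=I+(\textbf{x}_F)$ and that the hypothesis $\C\neq\C_{n,d}$ is automatic outside the degenerate case $n=d$) are accurate bookkeeping that the paper leaves implicit.
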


Let $\C$ be $3$-uniform clutter on $[n]$ such that $\{1,2,3\}, \{1,2,4\}, \{1,3,4\}, \{2,3,4\} \in \C$. If there exist no other circuit which contains $e= \{1,2\}$, then $e$ is a simplicial edge. Hence by Theorem~\ref{simplicial edge} we have the following corollary.

\begin{thm} \label{x5theorem}
Let $\C$ be $3$-uniform clutter on $[n]$ and $I=I(\bar{\C})$ be the circuit ideal of $\bar{\C}$. Assume that $\{1,2,3\}, \{1,2,4\}, \{1,3,4\}, \{2,3,4\} \in \C$
and there exist no other circuit which contains $\{1,2\}$. If $J=I+(x_1x_2x_3, x_1x_2x_4)$, then $\reg (I) = \reg(J)$.
\end{thm}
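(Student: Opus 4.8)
The plan is to derive this statement as an immediate consequence of Theorem~\ref{simplicial edge} applied to the edge $e=\{1,2\}$; the whole argument reduces to two bookkeeping checks, namely that $e$ is a simplicial edge and that the circuit-deletion $\C\setminus e$ corresponds exactly to adjoining the two generators $x_1x_2x_3,\,x_1x_2x_4$ to $I$. No new homological input is needed beyond the theorem already proved, which is why the authors announce it as a corollary.

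First I would verify that $e=\{1,2\}$ is a simplicial edge (a simplicial submaximal circuit for $d=3$). It is a submaximal circuit since $e\subset\{1,2,3\}\in\C$. By definition ${\rm N}[e]=e\cup\{c\colon \{1,2,c\}\in\C\}$, and by hypothesis the only circuits of $\C$ containing $\{1,2\}$ are $\{1,2,3\}$ and $\{1,2,4\}$; hence ${\rm N}[e]=\{1,2,3,4\}$. To see that this set is a clique in $\C$ I would list its four $3$-subsets, namely $\{1,2,3\},\{1,2,4\},\{1,3,4\},\{2,3,4\}$, all of which lie in $\C$ by assumption. Thus ${\rm N}[e]$ is a clique and $e$ is simplicial.

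Next I would apply Theorem~\ref{simplicial edge} with $\C'=\C\setminus e=\{F\in\C\colon e\nsubseteq F\}$, obtaining $\reg(I(\bar{\C}))=\reg(I(\bar{\C}'))$. It then remains to identify $I(\bar{\C}')$ with $J$. Deleting $e$ removes from $\C$ precisely the circuits containing $\{1,2\}$, which by hypothesis are $\{1,2,3\}$ and $\{1,2,4\}$ and nothing else; these two triples therefore pass into the complement, so $\bar{\C}'=\bar{\C}\cup\{\{1,2,3\},\{1,2,4\}\}$. Reading this off at the level of circuit ideals gives
\begin{equation*}
I(\bar{\C}')=I(\bar{\C})+(x_1x_2x_3,\,x_1x_2x_4)=I+(x_1x_2x_3,\,x_1x_2x_4)=J,
\end{equation*}
and combining with the displayed equality of regularities yields $\reg(I)=\reg(J)$.

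There is essentially no analytic obstacle, since all the real work sits inside Theorem~\ref{simplicial edge}; the only points demanding care are the two translations between combinatorics and algebra. In particular the hypothesis ``no other circuit contains $\{1,2\}$'' must be used twice: once to compute ${\rm N}[e]=\{1,2,3,4\}$, and once to guarantee that $\C\setminus e$ deletes exactly the two advertised circuits. I would also flag the mild nondegeneracy required by Theorem~\ref{simplicial edge}, namely $\C\neq\C_{n,3}$; this holds as soon as $\bar{\C}\neq\varnothing$, and the only configuration it excludes is $n=4$ with $\C=\C_{4,3}$, which may be set aside as trivial.
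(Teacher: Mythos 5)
Your proposal is correct and follows exactly the route the paper takes: the paper derives this statement as an immediate corollary of Theorem~\ref{simplicial edge} by observing that the hypotheses force $N[\{1,2\}]=\{1,2,3,4\}$ to be a clique, so that $\{1,2\}$ is a simplicial edge whose deletion adds precisely the generators $x_1x_2x_3, x_1x_2x_4$. Your write-up is in fact more careful than the paper's one-line justification, since you also verify the identification $I(\bar{\C}')=J$ and flag the nondegeneracy condition $\C\neq\C_{n,3}$.
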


E. Emtander in \cite{Emtander} has introduced  a generalized chordal clutter to be a $d$-uniform clutter, obtained inductively as follows:
\begin{itemize}
\item[$\bullet$] $\C_{n,d}$ is a generalized chordal clutter.
\item[$\bullet$] If $\mathcal{G}$ is generalized chordal clutter, then so is $\C = \mathcal{G} \cup_{\C_{i,d}} {\C_{n,d}}$ for all $0 \leq i <n$.
\item[$\bullet$] If $\mathcal{G}$ is generalized chordal and $V \subset V(\mathcal{G})$ is a finite set with $|V| = d$ and at least one element of $\{F \subset V: |F|=d-1\}$ is not a subset of any element of $\mathcal{G}$, then $\mathcal{G} \cup V$ is generalized chordal.
\end{itemize}

Also R. Woodroofe in \cite{Wood1} has defined a simplicial vertex in a $d$-uniform clutter to be a vertex $v$ such the if it belongs to two circuits $e_1, e_2$, then, there is another circuit in $(e_1\cup e_2)\setminus\{v\}$. He calls a clutter chordal if any minor of the clutter has a simplicial vertex.

\begin{rem} \rm \label{reducable to zero}
Let $\mathscr{C}$ be the class of $3$-uniforms clutters which can be transformed to empty set after a sequence of deletions of simplicial edges. Using Theorem~\ref{simplicial edge}, it is clear that if $\C \in \mathscr{C}$, then the ideal $I(\bar{\C})$ has a linear resolution over any field $K$. It is easy to see that generalized $3$-uniform chordal clutters are contained in this class, so they have linear resolution over any field $K$. This generalizes Theorem 5.1 of \cite{Emtander}. It is worth to say that $\mathscr{C}$ contains generalized chordal clutters strictly. For example, $\C=\{123, 124, 134, 234, 125, 126, 156, 256\}$ is in $\mathscr{C}$ but it is not a generalized chordal clutter. Also it is easy to see that any 3-uniform clutter which is chordal in sense of \cite{Wood1} has simplicial edges.
\end{rem}

\begin{defn}[Flip] \rm
Let $\C$ be $3$-uniform clutter on $[n]$. Assume that $\{ 1, 2, 3 \}, \{ 1, 2, 4 \} \in \C$ are the only circuits containing $\{ 1, 2 \}$ and there is no circuit in $\C$ containing $\{ 3, 4 \}$. Let $\C'= \C \cup \big\{ \{ 1, 3, 4 \}, \{ 2, 3, 4 \} \big\} \setminus \big\{ \{ 1, 2, 3 \}, \{ 1, 2, 4 \} \big\}$. Then $\C'$ is called a \textit{flip} of $\C$.
Clearly, if $\C'$ is a flip of $\C$, then  $\C$ is a flip of $\C'$ too (see the following illustration).
\end{defn}
\begin{center}
\psset{xunit=0.7cm,yunit=0.7cm,algebraic=true,dotstyle=o,dotsize=3pt 0,linewidth=0.8pt,arrowsize=3pt 2,arrowinset=0.25}
\begin{pspicture*}(-0.32,-0.1)(9.94,3.8)
\pspolygon[fillstyle=solid,fillcolor=gray,opacity=0.1](2,3)(0.98,2)(2.98,2)
\pspolygon[fillstyle=solid,fillcolor=lightgray,opacity=0.1](0.98,2)(2,1)(2.98,2)
\pspolygon[fillstyle=solid,fillcolor=gray,opacity=0.1](7,2)(8,3)(8,1)
\pspolygon[fillstyle=solid,fillcolor=lightgray,opacity=0.1](8,3)(9,2)(8,1)
\psline[linewidth=1.2pt](0.98,2)(2.98,2)
\psline[linewidth=1.2pt](8,3)(8,1)
\rput[tl](0.75,2.22){\small 1}
\rput[tl](3.08,2.24){\small 2}
\rput[tl](1.92,3.36){\small 3}
\rput[tl](1.96,0.94){\small 4}
\rput[tl](6.73,2.26){\small 1}
\rput[tl](9.06,2.24){\small 2}
\rput[tl](7.92,3.36){\small 3}
\rput[tl](7.96,0.94){\small 4}
\rput[tl](4.66,2.0){$\longleftrightarrow$}
\rput[tl](1.96,0.24){$\C$}
\rput[tl](7.86,0.24){$\C'$}
\end{pspicture*}
\end{center}

\begin{cor} \label{Flip}
Let $\C$ be $3$-uniform clutter on $[n]$ and $\C'$ be a flip of $\C$. Then, $\reg I(\bar{\C}) = \reg I(\bar{\C}')$.
\end{cor}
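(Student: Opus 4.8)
The plan is to realize both $\C$ and $\C'$ as the result of deleting a simplicial edge from a single common clutter, and then to apply Theorem~\ref{x5theorem} twice. Concretely, I would introduce the intermediate clutter
$$\widetilde{\C} = \C \cup \big\{ \{1,3,4\}, \{2,3,4\} \big\},$$
which fills in the full tetrahedron on the vertex set $\{1,2,3,4\}$: by hypothesis $\{1,2,3\}, \{1,2,4\} \in \C$, and the two new faces $\{1,3,4\}, \{2,3,4\}$ make all four triangles present. Since $\C$ has no circuit containing $\{3,4\}$, in particular $\{1,3,4\}, \{2,3,4\} \notin \C$, so $\widetilde{\C}$ genuinely has two circuits more than $\C$. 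Deleting the two new circuits from $\widetilde{\C}$ recovers $\C$, while deleting $\{1,2,3\}, \{1,2,4\}$ recovers $\C'$. Throughout I write $\widetilde{I} = I(\bar{\widetilde{\C}})$.

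First I would apply Theorem~\ref{x5theorem} to $\widetilde{\C}$ with the edge $\{1,2\}$. The four triangles $\{1,2,3\}, \{1,2,4\}, \{1,3,4\}, \{2,3,4\}$ all lie in $\widetilde{\C}$, and one must check that $\{1,2,3\}, \{1,2,4\}$ are the only circuits of $\widetilde{\C}$ containing $\{1,2\}$. This is exactly where the flip hypothesis enters: in $\C$ these are the only circuits through $\{1,2\}$, and the two added circuits $\{1,3,4\}, \{2,3,4\}$ do not contain $\{1,2\}$. Theorem~\ref{x5theorem} then gives
$$\reg \widetilde{I} = \reg\big( \widetilde{I} + (x_1x_2x_3, x_1x_2x_4) \big).$$
Adding $x_1x_2x_3, x_1x_2x_4$ to $\widetilde{I}$ is precisely the operation of deleting the circuits $\{1,2,3\}, \{1,2,4\}$ from $\widetilde{\C}$, that is, passing to $\widetilde{\C}\setminus\big\{\{1,2,3\},\{1,2,4\}\big\} = \C'$. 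Hence $\widetilde{I} + (x_1x_2x_3, x_1x_2x_4) = I(\bar{\C'})$ and $\reg \widetilde{I} = \reg I(\bar{\C'})$.

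Next, by the symmetry of the tetrahedron between the opposite edges $\{1,2\}$ and $\{3,4\}$, I would apply Theorem~\ref{x5theorem} to $\widetilde{\C}$ a second time, now with the edge $\{3,4\}$. Again all four triangles are present, and $\{1,3,4\}, \{2,3,4\}$ are the only circuits of $\widetilde{\C}$ containing $\{3,4\}$ — here I use that $\C$ has no circuit through $\{3,4\}$, so only the two added circuits do. Theorem~\ref{x5theorem} then yields
$$\reg \widetilde{I} = \reg\big( \widetilde{I} + (x_1x_3x_4, x_2x_3x_4) \big) = \reg I(\bar{\C}),$$
since deleting $\{1,3,4\}, \{2,3,4\}$ from $\widetilde{\C}$ returns $\C$. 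Combining the two equalities gives $\reg I(\bar{\C}) = \reg \widetilde{I} = \reg I(\bar{\C'})$, as required.

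All the homological content is carried by Theorem~\ref{x5theorem}, so no new cohomology computation is needed; the only care required is the bookkeeping that turns the two flip conditions — that $\{1,2,3\}, \{1,2,4\}$ are the only circuits of $\C$ through $\{1,2\}$, and that no circuit of $\C$ passes through $\{3,4\}$ — into the two ``no other circuit contains the edge'' hypotheses of Theorem~\ref{x5theorem} for $\widetilde{\C}$. The main conceptual point, and the only genuine pitfall, is that one should not attempt to compare $\C$ and $\C'$ directly (neither is obtained from the other by a single simplicial-edge deletion), but instead route through the common refinement $\widetilde{\C}$ containing the whole tetrahedron. The only degenerate case is when $\widetilde{\C}$ happens to be the full clutter $\C_{n,3}$, which the flip hypotheses force to occur only when $n=4$ and $\C = \big\{\{1,2,3\},\{1,2,4\}\big\}$; there the claim is immediate from the evident symmetry between $I(\bar{\C}) = (x_1x_3x_4, x_2x_3x_4)$ and $I(\bar{\C'}) = (x_1x_2x_3, x_1x_2x_4)$.
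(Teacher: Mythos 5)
Your proof is correct and takes essentially the same route as the paper: the paper likewise introduces the intermediate clutter $\C''=\C\cup\big\{\{1,3,4\},\{2,3,4\}\big\}$ (your $\widetilde{\C}$) and applies Theorem~\ref{x5theorem} once to the edge $\{1,2\}$ and once to $\{3,4\}$. Your bookkeeping of which deletion recovers $\C$ and which recovers $\C'$ is the correct one (the paper's prose swaps the two edges, a harmless slip), and your separate handling of the degenerate case $\widetilde{\C}=\C_{n,3}$ is a small extra care the paper leaves implicit.
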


\begin{proof}
With the same notation as in the above definition, let $\C''= \C \cup \big\{ \{ 1, 3, 4 \}, \{ 2, 3, 4 \} \big\}$. Theorem~\ref{x5theorem} applied to $\{3,4\}$, shows that $\reg I(\bar{\C}'') = \reg I(\bar{\C}')$. Using Theorem~\ref{x5theorem} again applied to $\{1,2\}$, we conclude that $\reg I(\bar{\C}'')= \reg I(\bar{\C})$. So that $\reg I(\bar{\C})= \reg I(\bar{\C}')$, as desired.
\end{proof}

For our next theorem, we use the following lemmas.

\begin{lem}\label{lemma for karbordi}
Let $n \geq 4, S=K[x_1, \ldots, x_n]$ be the polynomial ring and $T_n$ be the ideal
$$T_n = (x_4 \cdots x_n, x_1x_2x_3\;\hat{x}_4 \cdots x_n, \ldots, x_1x_2x_3\;{x}_4 \cdots \hat{x}_n).$$
Then, we have:
\begin{itemize}
\item[\rm (i)] $T_n = \left( T_{n-1} \cap (x_n) \right) +  (x_1x_2x_3\;{x}_4 \cdots \hat{x}_n)$.
\item[\rm (ii)] $\depth \dfrac{S}{T_n} = n-2$.
\end{itemize}
\end{lem}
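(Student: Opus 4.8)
```latex
\textbf{Proof proposal.} The statement has two parts, and the plan is to establish (i) first by a direct manipulation of the generators, and then use (i) together with the previously developed depth machinery to obtain (ii). For part (i), I would simply compare generators on both sides. The ideal $T_n$ is generated by $x_4\cdots x_n$ together with the monomials $x_1x_2x_3 x_4\cdots \hat{x}_j\cdots x_n$ for $4\le j\le n$ (the hat meaning omission of $x_j$). The first claim is that $T_n$ decomposes as $\left(T_{n-1}\cap(x_n)\right)+(x_1x_2x_3 x_4\cdots \hat{x}_n)$, where the last summand is the single generator obtained by omitting $x_n$. The plan is to note that $T_{n-1}=(x_4\cdots x_{n-1},\, x_1x_2x_3 x_4\cdots\hat{x}_4\cdots x_{n-1},\ldots)$ lives in $K[x_1,\ldots,x_{n-1}]$, so intersecting with $(x_n)$ multiplies each of its generators by $x_n$; concretely $x_4\cdots x_{n-1}\cdot x_n=x_4\cdots x_n$ and $x_1x_2x_3 x_4\cdots\hat{x}_j\cdots x_{n-1}\cdot x_n = x_1x_2x_3 x_4\cdots\hat{x}_j\cdots x_n$ for each $j\le n-1$. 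Adding back the one missing generator $x_1x_2x_3x_4\cdots\hat{x}_n$ recovers exactly the generating set of $T_n$. This is a routine generator-by-generator check, using that the intersection of a monomial ideal in fewer variables with a principal monomial ideal in a fresh variable is computed by taking least common multiples.

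For part (ii), I would proceed by induction on $n$, using the decomposition in (i) and the depth lemmas already available. Set $I_1:=T_{n-1}\cap(x_n)$ and $I_2:=(x_1x_2x_3 x_4\cdots\hat{x}_n)$, so that $T_n=I_1+I_2$ by (i). The base case $n=4$ should be handled by direct computation: there $T_4=(x_4,\,x_1x_2x_3)$, and since $x_4$ and $x_1x_2x_3$ form a regular sequence, $\depth S/T_4 = n-2 = 2$. For the inductive step, the plan is to compute the depths of $S/I_1$, $S/I_2$, and $S/(I_1+I_2)$ and feed them into Lemma~\ref{sum}. The depth of $S/I_2$ is immediate: $I_2$ is principal generated by a single squarefree monomial of degree $n-1$, so $\depth S/I_2 = n-1$. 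For $\depth S/I_1$, I would use that $I_1=x_n\cdot T_{n-1}$ viewed appropriately, together with the inductive hypothesis $\depth\,(K[x_1,\ldots,x_{n-1}]/T_{n-1})=(n-1)-2=n-3$; multiplying all generators by the single new variable $x_n$ and invoking a result of the type of Lemma~\ref{union of clutters 1} should give $\depth S/I_1=n-2$.

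The main obstacle, and the step I would treat most carefully, is controlling $\depth S/(I_1+I_2)$ so that the hypotheses of Lemma~\ref{sum} are met, namely that $\depth S/I_1\ge \depth S/I_2 > \depth S/(I_1+I_2)$ (or the appropriate ordering after relabeling). I would compute $I_1+I_2$ explicitly as a monomial ideal and identify its depth, expecting it to drop to $n-3$; then Lemma~\ref{sum} would yield $\depth S/(I_1\cap I_2)=1+\depth S/(I_1+I_2)$. I must be careful, however, because the quantity that (i) exhibits is a \emph{sum} $I_1+I_2$ equal to $T_n$, not the intersection $I_1\cap I_2$ directly; so rather than applying Lemma~\ref{sum} to $T_n$ itself I would apply the Mayer--Vietoris long exact sequence (Theorem~\ref{Mayer-Vietoris sequence}) to the pair $(I_1,I_2)$ and track the local cohomology modules $H^i_{\mathfrak m}$ directly, reading off the smallest $i$ with $H^i_{\mathfrak m}(S/T_n)\ne 0$. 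Establishing the exact numerical bookkeeping in this Mayer--Vietoris ladder, ensuring no cancellation or unexpected nonvanishing spoils the conclusion $\depth S/T_n=n-2$, is the technical crux; the remaining arithmetic is routine.
```
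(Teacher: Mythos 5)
Your part (i) and the setup for part (ii) --- the base case $T_4=(x_4,x_1x_2x_3)$, the identification of $I_1=T_{n-1}\cap(x_n)$ as $x_n$ times $T_{n-1}$ with $\depth S/I_1=n-2$ via Lemma~\ref{union of clutters 1} and the induction hypothesis, and $\depth S/I_2=n-1$ for the principal ideal $I_2=(x_1x_2x_3x_4\cdots\hat{x}_n)$ --- all match the paper. But the step you defer as ``the technical crux'' is exactly the one the paper closes with a single observation that you never make: the \emph{intersection}
$$I_1\cap I_2=\left(T_{n-1}\cap(x_n)\right)\cap(x_1x_2x_3\,x_4\cdots\hat{x}_n)=(x_1x_2x_3\,x_4\cdots x_n)$$
is principal, hence $\depth S/(I_1\cap I_2)=n-1$. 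That is precisely the hypothesis of Lemma~\ref{Marcel} (with $r=n-1\le\depth S/I_2=n-1$), which yields $H^i_{\mathfrak m}(S/T_n)\cong H^i_{\mathfrak m}(S/I_1)=0$ for all $i<n-2$; since $\dim S/T_n=n-2$ (its minimal primes have height $2$), Grothendieck's theorem then forces $\depth S/T_n=n-2$. Without computing $I_1\cap I_2$ your Mayer--Vietoris ``bookkeeping'' cannot be carried out at all, because the connecting maps in that sequence run through $H^\bullet_{\mathfrak m}(S/(I_1\cap I_2))$, and the exactness of the conclusion (as opposed to the inequality $\ge n-2$) needs the dimension count you do not mention.

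Your detour through Lemma~\ref{sum} is also misdirected in two ways. First, that lemma produces $\depth R/(I_1\cap I_2)$ from $\depth R/(I_1+I_2)$, i.e.\ it runs in the opposite direction to what you need here (you want the depth of the sum $T_n=I_1+I_2$, knowing the intersection); Lemma~\ref{Marcel} is the paper's tool for this direction. Second, your expectation that $\depth S/(I_1+I_2)$ ``drops to $n-3$'' contradicts the very statement being proved, since $I_1+I_2$ \emph{is} $T_n$ and the claim is that its quotient has depth $n-2$. So the proposal has the right skeleton and the right auxiliary depths, but it is missing the one computation that makes the induction close, and the intermediate reasoning about Lemma~\ref{sum} would lead you astray if executed as written.
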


\begin{proof}
(i) This is an easy computation.

(ii) The proof is on induction over $n$. For $n=4$, every thing is clear. Let $n>4$ and (ii) be true for $n-1$.\\
Clearly, $\left( T_{n-1} \cap (x_n) \right) \cap  (x_1x_2x_3\;{x}_4 \cdots \hat{x}_n) = (x_1x_2x_3\;{x}_4 \cdots {x}_n)$ which has depth $n-1$. So by Lemma~\ref{Marcel}, \ref{union of clutters 1} and induction hypothesis, we have:
$$\depth \dfrac{S}{T_n} = \depth \dfrac{S}{T_{n-1}} = n-2.$$
\end{proof}

\begin{lem}\label{lemma for karbordi 2}
Let $\C$ be a $3$-uniform clutter on $[n]$ such that $F=\{1, 2, 3\} \in \C$ and for all $r > 3$,
\begin{equation} \label{Local Property 1}
\big\{ \{1,2,r\}, \{1,3,r\}, \{2,3,r\} \big\}  \nsubseteq \C.
\end{equation}
Let $\C_1 = \C \setminus F$ and $I=I(\bar{\C}), I_1=I(\bar{\C}_1)$. Then,
\begin{itemize}
\item[\rm (i)] $\depth \dfrac{S}{I^\vee+(x_1, x_2,x_3)} \geq \depth \dfrac{S}{I^\vee} - 1$.
\item[\rm (ii)] $\depth \dfrac{S}{I_1^\vee} \geq \depth \dfrac{S}{I^\vee}$.
\end{itemize}
\end{lem}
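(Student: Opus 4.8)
The plan is to prove (i) by a direct reduction modulo $(x_1,x_2,x_3)$, where the hypothesis forces a dramatic collapse, and then to deduce (ii) from (i) through the Mayer--Vietoris sequence. First I would record the translation. Since $\C_1=\C\setminus F$ we have $\bar{\C}_1=\bar{\C}\cup\{F\}$, so that $I_1=I+(x_1x_2x_3)$ and
$$I_1^\vee=\bigcap_{G\in\bar{\C}_1}P_G=\Big(\bigcap_{G\in\bar{\C}}P_G\Big)\cap P_F=I^\vee\cap(x_1,x_2,x_3).$$
Write $P=(x_1,x_2,x_3)$ and $a=\depth S/I^\vee$ (and assume $n\geq 4$, the only case of interest). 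Two facts will be used repeatedly: $\depth S/P=n-3$ since $S/P\cong K[x_4,\dots,x_n]$, and $a\leq\dim S/I^\vee=n-3$, the latter because $I^\vee$ is an intersection of the height-$3$ primes $P_G$, $G\in\bar{\C}$.

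For part (i) I would pass to $\bar{S}=S/P\cong K[x_4,\dots,x_n]$. As $S/(I^\vee+P)$ is annihilated by $x_1,x_2,x_3$ it is an $\bar{S}$-module, so $\depth S/(I^\vee+P)=\depth_{\bar{S}}\bar{S}/\overline{I^\vee}$, where $\overline{I^\vee}$ denotes the image of $I^\vee$; being monomial, it is generated by those squarefree monomials $\textbf{x}_T$, $T\subseteq\{4,\dots,n\}$, that belong to $I^\vee$. Directly from $I^\vee=\bigcap_{G\in\bar{\C}}P_G$, such a $T$ lies in $I^\vee$ exactly when $T\cap G\neq\varnothing$ for every $G\in\bar{\C}$. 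This is where the hypothesis enters: for each $r>3$ it supplies a circuit $\{i,j,r\}\in\bar{\C}$ with $\{i,j\}\subset\{1,2,3\}$, and since $T\subseteq\{4,\dots,n\}$ the only way for $T$ to meet $\{i,j,r\}$ is to contain $r$. Thus every $r>3$ is forced into $T$, so the unique admissible set is $T=\{4,\dots,n\}$ (which does meet every $G\in\bar{\C}$, as $\{1,2,3\}\notin\bar{\C}$ means no circuit is contained in $\{1,2,3\}$), and $\overline{I^\vee}=(x_4\cdots x_n)$. Consequently
$$\depth S/(I^\vee+P)=\depth_{\bar{S}}\bar{S}/(x_4\cdots x_n)=(n-3)-1=n-4\geq a-1,$$
the last inequality being exactly $a\leq n-3$.

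For part (ii) I would feed (i) into the Mayer--Vietoris short exact sequence attached to $I^\vee$ and $P$, namely
$$0\longrightarrow S/I_1^\vee\longrightarrow S/I^\vee\oplus S/P\longrightarrow S/(I^\vee+P)\longrightarrow 0,$$
which is legitimate because $I_1^\vee=I^\vee\cap P$. In the associated long exact sequence of local cohomology, fix $i<a$. Then $H^i_{\mathfrak{m}}(S/I^\vee)=0$ since $i<a$; $H^i_{\mathfrak{m}}(S/P)=0$ since $i<a\leq n-3$; and $H^{i-1}_{\mathfrak{m}}(S/(I^\vee+P))=0$ by (i), because $i-1<a-1\leq\depth S/(I^\vee+P)$. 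The segment $H^{i-1}_{\mathfrak{m}}(S/(I^\vee+P))\to H^i_{\mathfrak{m}}(S/I_1^\vee)\to H^i_{\mathfrak{m}}(S/I^\vee)\oplus H^i_{\mathfrak{m}}(S/P)$ then squeezes $H^i_{\mathfrak{m}}(S/I_1^\vee)=0$ for every $i<a$, so that $\depth S/I_1^\vee\geq a=\depth S/I^\vee$.

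The genuine content is concentrated in (i): part (ii) is a formal consequence once (i) is in hand. The decisive --- and the only delicate --- point is the observation that the hypothesis forces every vertex $>3$ into every transversal of $\bar{\C}$ supported on $\{4,\dots,n\}$, collapsing $\overline{I^\vee}$ all the way to the principal ideal $(x_4\cdots x_n)$. This is precisely where the assumption is indispensable: if $\{1,2,3\}$ were a face of some tetrahedron of $\C$, the collapse would fail, and with it (i) itself.
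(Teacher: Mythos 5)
Your proof is correct, and for part (i) it takes a genuinely different route from the paper, while part (ii) coincides with the paper's argument (Mayer--Vietoris for the pair $I^\vee$, $(x_1,x_2,x_3)$ with intersection $I_1^\vee$, fed with the vanishing supplied by (i)). For (i), the paper never identifies the quotient $S/(I^\vee+(x_1,x_2,x_3))$ explicitly: it translates the hypothesis into ``for each $r>3$ some $P_F$ with $F\in\bar{\C}$ is contained in $(x_1,x_2,x_3,x_r)$'', deduces the identity $I^\vee=I^\vee\cap(x_1,x_2,x_3,x_4\cdots x_n)$, uses $x_4\cdots x_n\in I^\vee$ to see that the sum of the two ideals is $I^\vee+(x_1,x_2,x_3)$, and then extracts $H_\mathfrak{m}^{i-1}\left(S/(I^\vee+(x_1,x_2,x_3))\right)=0$ for $i<t$ from a second Mayer--Vietoris sequence. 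You instead reduce modulo $(x_1,x_2,x_3)$ and compute the image of $I^\vee$ outright, using the same combinatorial input (each $r>3$ is covered by a circuit of $\bar{\C}$ inside $\{1,2,3,r\}$) to force every transversal supported on $\{4,\ldots,n\}$ to be all of $\{4,\ldots,n\}$, so that the image collapses to the principal ideal $(x_4\cdots x_n)$. Your version buys more: the exact value $\depth S/(I^\vee+(x_1,x_2,x_3))=n-4$ (a hypersurface ring) rather than just the inequality $\geq t-1$, and it dispenses with local cohomology in step (i) entirely; the paper's version avoids having to describe the generators of the quotient ideal. Both arguments ultimately rest on $\depth S/I^\vee\leq\dim S/I^\vee=n-3$, which requires $\bar{\C}\neq\varnothing$; your standing assumption $n\geq 4$ together with the hypothesis (which then puts one of $\{1,2,4\},\{1,3,4\},\{2,3,4\}$ into $\bar{\C}$) guarantees this, matching the paper's implicit use of the same bound.
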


\begin{proof}
Let $t:= \depth {S}/{I^\vee} \leq \dim {S}/{I^\vee} = n-3$.\\
(i) One can easily check that condition (\ref{Local Property 1}), is equivalent to say that
\begin{center}
for all $r>3$, there exist $F \in \bar{\C}$ such that $P_F \subset (x_1, x_2, x_3, x_r)$.
\end{center}
So that
\begin{equation*}
\begin{split}
I^\vee =  \bigcap\limits_{F \in \bar{\C}} P_F & = \left( \bigcap\limits_{F \in \bar{\C}} P_F \right) \cap \left( (x_1, x_2, x_3, x_4) \cap \cdots \cap (x_1, x_2, x_3, x_n) \right) \\
& = \left( \bigcap\limits_{F \in \bar{\C}} P_F \right) \cap (x_1, x_2, x_3, x_4 \cdots x_n) = I^\vee \cap (x_1, x_2, x_3, x_4 \cdots x_n).
\end{split}
\end{equation*}
Clearly, $x_4 \cdots x_n \in I^\vee$. So, from the Mayer-Vietoris long exact sequence
\begin{equation*}
\begin{split}
\cdots \rightarrow H_\mathfrak{m}^{i-1} \left( \tfrac{S}{I^\vee} \right) \oplus H_\mathfrak{m}^{i-1} \left( \tfrac{S}{(x_1, x_2, x_3, x_4 \cdots x_n)} \right) \rightarrow H_\mathfrak{m}^{i-1} \left( \tfrac{S}{I^\vee + (x_1, x_2, x_3)} \right)
\rightarrow  H_\mathfrak{m}^{i} \left( \tfrac{S}{I^\vee} \right) \rightarrow \cdots
\end{split}
\end{equation*}
we have:
\begin{equation} \label{Local EQ 7}
H_\mathfrak{m}^{i-1} \left( \frac{S}{I^\vee + (x_1, x_2, x_3)} \right)=0, \qquad \text{for all } i<t \leq n-3.
\end{equation}
This proves inequality (i).\\
(ii) Clearly, $I_1^\vee = I^\vee \cap (x_1, x_2, x_3)$. So from Mayer-Vietoris long exact sequence
\begin{equation*}
\begin{split}
\cdots \rightarrow H_\mathfrak{m}^{i-1} \left( \tfrac{S}{I^\vee + (x_1, x_2, x_3)} \right) \rightarrow  H_\mathfrak{m}^{i} \left( \tfrac{S}{I_1^\vee} \right) \rightarrow H_\mathfrak{m}^{i} \left( \tfrac{S}{I^\vee} \right) \oplus H_\mathfrak{m}^{i} \left( \tfrac{S}{(x_1, x_2, x_3)} \right) \rightarrow \cdots
\end{split}
\end{equation*}
and (\ref{Local EQ 7}), we have:
\begin{equation*}
H_\mathfrak{m}^{i} \left( \frac{S}{I_1^\vee} \right)=0, \qquad \text{for all } i < t \leq n-3.
\end{equation*}
\end{proof}

\begin{thm}\label{karbordi}
Let $\C$ be a $3$-uniform clutter on $[n]$ such that $F=\{1, 2, 3\} \in \C$ and for all $r > 3$, $\big\{ \{1,2,r\}, \{1,3,r\}, \{2,3,r\} \big\}  \nsubseteq \C$. Let $\C_1 = \C \setminus F$, $\C'= \C_1 \cup \big\{ \{0,1,2\}, \{0,1,3\}, \{0,2,3\} \big\}$ and $I=I(\bar{\C}), J=I(\bar{\C}')$ be the circuit ideals in the polynomial ring $S=K[x_0, x_1, \ldots, x_n]$. Then, $\reg (I) = \reg (J)$.
\end{thm}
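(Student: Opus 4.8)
The plan is to invoke Remark~\ref{Passing to regularity}: since $I$ and $J$ are generated in degree $3$ in the $(n+1)$-variable ring $S=K[x_0,\ldots,x_n]$, it is enough to prove $\depth S/I^\vee=\depth S/J^\vee$. Throughout I regard $\C$ and $\C_1$ as clutters on $\{0,1,\ldots,n\}$ with $0$ an isolated vertex (harmless by Lemma~\ref{redundant vertex}), and I set $t:=\depth S/I^\vee$. Because the minimal primes of $I^\vee$ and of $J^\vee$ are each generated by a $3$-subset, both $S/I^\vee$ and $S/J^\vee$ have dimension $n-2$; so the entire task is to pin the common depth to the value $t$.

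First I would read the Alexander duals off the combinatorics. Writing $P_F=(x_i:i\in F)$ and splitting $\overline{\C}'$ into circuits inside $[n]$ and circuits through $0$ gives
$$J^\vee=A_1\cap Q,\qquad A_1:=\bigcap_{F\in\overline{\C_1},\,F\subseteq[n]}P_F,\qquad Q:=\bigcap_{\{i,j\}\neq\{1,2\},\{1,3\},\{2,3\}}(x_0,x_i,x_j).$$
A vertex-cover computation identifies $Q=(x_0)+T_n$ with $T_n$ as in Lemma~\ref{lemma for karbordi}, so $S/Q\cong K[x_1,\ldots,x_n]/T_n$ and $\depth S/Q=n-2$ by Lemma~\ref{lemma for karbordi}(ii). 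The ideal $A_1$ does not involve $x_0$ and is the Alexander dual of the degree-$3$ ideal $I_1^{[n]}:=I(\overline{\C_1})\cap K[x_1,\ldots,x_n]$; hence Terai's Theorem~\ref{Terai} together with Lemma~\ref{redundant vertex} (applied to $I_1$ and $I_1^{[n]}$, whose regularities agree) gives $\depth S/A_1=\depth S/I_1^\vee$, which is $\geq t$ by Lemma~\ref{lemma for karbordi 2}(ii). Thus $n-2\geq\depth S/A_1\geq t$.

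The crux is to evaluate $\depth S/(A_1+Q)$. Reducing modulo the nonzerodivisor $x_0$ turns $A_1+Q$ into $A_1'+T_n$ inside $S'=K[x_1,\ldots,x_n]$, where $A_1'=A_0'\cap(x_1,x_2,x_3)$, $A_0'=\bigcap_{F\in\overline{\C},\,F\subseteq[n]}P_F$. Writing $T_n=U+(x_4\cdots x_n)$ with $U=(x_1\cdots\hat{x}_i\cdots x_n:i\in[n])$, one checks $U\subseteq A_1'$ (each product lies in every $P_F$ since $|F|=3$, and, omitting only one index, in $(x_1,x_2,x_3)$), so $A_1'+T_n=A_1'+(x_4\cdots x_n)$. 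Since every $F\in\overline{\C}\cap[n]$ is a $3$-set $\neq\{1,2,3\}$ and hence meets $\{4,\ldots,n\}$, we have $x_4\cdots x_n\in A_0'$, so Lemma~\ref{Moduli sum rule} yields $A_1'+(x_4\cdots x_n)=A_0'\cap(x_1,x_2,x_3,x_4\cdots x_n)$. Finally the hypothesis (\ref{Local Property 1}), exactly as in the proof of Lemma~\ref{lemma for karbordi 2}(i), forces $A_0'\subseteq(x_1,x_2,x_3,x_4\cdots x_n)$, so the intersection collapses to $A_0'$. Therefore $\depth S/(A_1+Q)=\depth S'/A_0'=t-1$, the last equality again by Terai and Lemma~\ref{redundant vertex}.

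At this point I would feed the three depths into Lemma~\ref{sum}. Since
$$\depth S/Q=n-2\ \geq\ \depth S/A_1\ \geq\ t\ >\ t-1=\depth S/(A_1+Q),$$
Lemma~\ref{sum} (with $I_1=Q$, $I_2=A_1$) gives $\depth S/J^\vee=\depth S/(A_1\cap Q)=1+\depth S/(A_1+Q)=t=\depth S/I^\vee$, whence $\reg I=\reg J$. The main obstacle is the third paragraph: recognizing $Q=(x_0)+T_n$ so that Lemma~\ref{lemma for karbordi} applies, and, above all, the collapse $A_1'+(x_4\cdots x_n)=A_0'$, which is precisely where the local hypothesis on $\{1,2,3\}$ is used and which is what makes the error term in Lemma~\ref{sum} drop by exactly one.
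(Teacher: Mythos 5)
Your proposal is correct and follows essentially the same route as the paper: the same decomposition $J^\vee=(\text{dual of the }\C_1\text{-part})\cap T$ with $T=(x_0)+T_n$, the same use of Lemma~\ref{lemma for karbordi} for $\depth S/T=n-2$ and of Lemma~\ref{lemma for karbordi 2}(ii) for the middle depth, the same collapse $A_1+T=(x_0,I^\vee)$ via modularity and the hypothesis on $\{1,2,3\}$, and the same final application of Lemma~\ref{sum}. The only differences are notational (your $A_1$, $Q$ are the paper's $I_1^\vee$, $T$) and a slightly more explicit bookkeeping of the variable $x_0$.
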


\begin{center}
\psset{xunit=0.7cm,yunit=0.7cm,algebraic=true,dotstyle=o,dotsize=3pt 0,linewidth=0.8pt,arrowsize=3pt 2,arrowinset=0.25}
\begin{pspicture*}(-0.4,1)(12.14,3.4)
\pspolygon[linestyle=none,fillstyle=solid,fillcolor=lightgray,opacity=0.1](8,3)(10,3)(9.04,1.52)
\pspolygon[linestyle=none,fillstyle=solid,fillcolor=lightgray,opacity=0.1](4,3)(2.98,2.4)(3.02,1.38)
\pspolygon[linestyle=none,fillstyle=solid,fillcolor=lightgray,opacity=0.1](2,3)(2.98,2.4)(3.02,1.38)
\pspolygon[linestyle=none,fillstyle=solid,fillcolor=gray,opacity=0.1](2,3)(4,3)(2.98,2.4)
\psline(8,3)(10,3)
\psline(10,3)(9.04,1.52)
\psline(9.04,1.52)(8,3)
\psline(4,3)(2.98,2.4)
\psline(2.98,2.4)(3.02,1.38)
\psline(3.02,1.38)(4,3)
\psline(2,3)(2.98,2.4)
\psline(2.98,2.4)(3.02,1.38)
\psline(3.02,1.38)(2,3)
\psline(2,3)(4,3)
\psline(4,3)(2.98,2.4)
\psline(2.98,2.4)(2,3)
\rput[tl](5.72,2.4){$\longleftrightarrow$}
\rput[tl](1.86,3.3){\small 1}
\rput[tl](4.0,3.3){\small 2}
\rput[tl](2.96,1.26){\small 3}
\rput[tl](2.9,2.76){\small 0}
\rput[tl](7.86,3.3){\small 1}
\rput[tl](9.96,3.3){\small 2}
\rput[tl](8.96,1.46){\small 3}
\end{pspicture*}
\end{center}

\begin{proof}
By Remark~\ref{Passing to regularity}, it is enough to show that $\depth S/I^\vee = \depth S/J^\vee$.

Let $I_1=I(\bar{\C}_1)$. Clearly, $I_1^\vee = (x_1, x_2, x_3) \cap I^\vee$ and
\begin{equation*}
\begin{split}
J^\vee &= I_1^\vee \cap \left( \bigcap\limits_{i=4}^{n} (x_0, x_1, x_i) \right) \cap \left( \bigcap\limits_{i=4}^{n} (x_0, x_2, x_i) \right) \cap \left( \bigcap\limits_{3 \leq i < j \leq n} (x_0, x_i, x_j) \right) \\
& = (x_0, x_4 \cdots x_n, x_1x_2x_3\;\hat{x}_4 \cdots x_n, \ldots, x_1x_2x_3\;{x}_4 \cdots \hat{x}_n ) \cap I_1^\vee.
\end{split}
\end{equation*}
Let $T$ be the ideal $T= (x_0, x_4 \cdots x_n, x_1x_2x_3\;\hat{x}_4 \cdots x_n, \ldots, x_1x_2x_3\;{x}_4 \cdots \hat{x}_n ).$
Then, $J^\vee = I_1^\vee \cap T$ and by Lemma~\ref{lemma for karbordi}, $\depth \frac{S}{T} = n-2$.
Moreover, our assumption implies that for all $i>4$, there exist $F \in \bar{\C}$ such that $P_F \subset (x_1, x_2, x_3, x_r)$. So that
\begin{align} \label{aa}
I_1^\vee + T & = (x_0, x_4 \cdots x_n , I_1^\vee)  = (x_0) + \left( x_4 \cdots x_n , \left[ (x_1, x_2, x_3) \cap \left( \bigcap\limits_{F \in \bar{\C}} P_F \right) \right] \right) \nonumber \\
& = (x_0) + \left( (x_1, x_2, x_3, x_4) \cap \cdots \cap (x_1, x_2, x_3, x_n) \cap \left( \bigcap\limits_{F \in \bar{\C}} P_F \right) \right) = (x_0) + \left( \bigcap\limits_{F \in \bar{\C}} P_F \right) = (x_0, I^\vee).
\end{align}
Hence, by Lemma~\ref{lemma for karbordi 2}(ii), $\depth \frac{S}{I_1^\vee + T} = \depth \frac{S}{I^\vee} -1 \leq  \depth \frac{S}{I_1^\vee} -1.$
Thus, $\depth \frac{S}{T} \geq \depth \frac{S}{I_1^\vee} > \depth \frac{S}{I_1^\vee + T}$. Using Lemma~\ref{sum} and (\ref{aa}), $\depth \frac{S}{J^\vee} = 1+ \depth \frac{S}{I_1^\vee + T} = \depth \frac{S}{I^\vee}.$
\end{proof}

\begin{lem}\label{tetrahedron}
Let $\mathfrak T$ be a hexahedron. Then, the circuit ideal of  $\overline{\mathfrak T}$ does not have linear resolution. If ${\mathfrak T}'$ be the hexahedron without one circuit, then the circuit ideal of  $\overline{{\mathfrak T}'}$ has a linear resolution.
\end{lem}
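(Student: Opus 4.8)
My reading is that the hexahedron $\mathfrak{T}$ is the triangular bipyramid, i.e.\ the unique triangulation of the sphere with six triangular faces; it has five vertices, which I place as two non-adjacent apexes $4,5$ over the equatorial triangle $\{1,2,3\}$. Then the face set is $\mathfrak{T} = \{124,134,234,125,135,235\}$, and the complement is $\bar{\mathfrak T} = \{123,145,245,345\}$, giving the circuit ideal $I := I(\bar{\mathfrak T}) = (x_1x_2x_3,\,x_1x_4x_5,\,x_2x_4x_5,\,x_3x_4x_5)$ in $S = K[x_1,\dots,x_5]$. By Remark~\ref{Passing to regularity} (equivalently Theorem~\ref{Eagon-Reiner}), $I$ has a $3$-linear resolution if and only if $S/I^\vee$ is Cohen--Macaulay, so everything reduces to comparing $\depth S/I^\vee$ with $\dim S/I^\vee$.

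For the first assertion I would compute the Alexander dual explicitly. Using the modular law (Lemma~\ref{Moduli sum rule}) one gets $(x_1,x_4,x_5)\cap(x_2,x_4,x_5)\cap(x_3,x_4,x_5) = (x_4,x_5,x_1x_2x_3)$, and hence $I^\vee = P_{123}\cap P_{145}\cap P_{245}\cap P_{345} = (x_1,x_2,x_3)\cap(x_4,x_5,x_1x_2x_3)$. Set $I_1 = (x_1,x_2,x_3)$ and $I_2 = (x_4,x_5,x_1x_2x_3)$. Then $S/I_1 \cong K[x_4,x_5]$ and $S/I_2\cong K[x_1,x_2,x_3]/(x_1x_2x_3)$ are Cohen--Macaulay of depth $2$, while $I_1+I_2 = \mathfrak{m}$ forces $\depth S/(I_1+I_2)=0$. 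The chain $\depth S/I_1 \ge \depth S/I_2 > \depth S/(I_1+I_2)$ is exactly the hypothesis of Lemma~\ref{sum}, which then yields $\depth S/I^\vee = \depth S/(I_1\cap I_2) = 1$. On the other hand the four primes $P_{123},P_{145},P_{245},P_{345}$ are the (irredundant) minimal primes of $I^\vee$, each of height $3$, so $\dim S/I^\vee = 2$. Thus $\depth S/I^\vee = 1 < 2 = \dim S/I^\vee$, $S/I^\vee$ is not Cohen--Macaulay, and $I(\bar{\mathfrak T})$ has no linear resolution.

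For the second assertion, the symmetry group of $\mathfrak{T}$ acts transitively on its six faces, so up to relabelling it is enough to delete one fixed circuit; I take $\mathfrak{T}' = \{134,234,125,135,235\}$ (the bipyramid with the single face $124$ removed). I would then exhibit $\mathfrak{T}'$ as reducible to $\varnothing$ by successive deletions of simplicial edges: the edge $\{1,2\}$ lies only in $125$, and after removing $125$ the edge $\{1,4\}$ lies only in $134$, then $\{2,4\}$ only in $234$, then $\{1,5\}$ only in $135$, and finally $\{2,3\}$ only in $235$. At each stage the chosen edge is contained in a single circuit, hence is simplicial, so Theorem~\ref{simplicial edge} applies and the clutter lies in the class $\mathscr{C}$ of Remark~\ref{reducable to zero}; therefore $I(\bar{\mathfrak T}')$ has a $3$-linear resolution over every field $K$.

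The only real work is in the first part: choosing the decomposition $I^\vee = I_1\cap I_2$ that both computes the dual cleanly and makes the numerical hypotheses of Lemma~\ref{sum} transparent. Once that decomposition is found the depth drop to $1$ is immediate, and the second part is essentially bookkeeping once the initial simplicial edge $\{1,2\}$ is spotted.
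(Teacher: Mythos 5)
Your proof is correct and follows essentially the same route as the paper: the same dual decomposition $I^\vee=(x_1,x_2,x_3)\cap(x_4,x_5,x_1x_2x_3)$ with a Mayer--Vietoris/depth argument for the first part, and deletion of edges lying in a single circuit for the second. You merely make explicit (via Lemma~\ref{sum}) the computation $\depth S/I^\vee=1$ that the paper leaves as a one-line appeal to Theorem~\ref{Mayer-Vietoris sequence}.
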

\vspace*{0.1cm}
\begin{center}
 \psset{unit=0.7cm}
 \psset{unit=0.5cm}
  \begin{pspicture}(0,0)(4,4)
     \psline(-0.5,2)(1,4)
     \psline(-0.5,2)(1,0)
     \psline(-0.5,2)(1.5,1.75)
     \psline(1.5,1.75)(2.25,2.25)
     \psline(1.5,1.75)(1,4)
     \psline(2.25,2.25)(1,4)
     \psline(1.5,1.75)(1,0)
     \psline(2.25,2.25)(1,0)
     \psline[linewidth=0.3pt,linestyle=dashed](-0.5,2)(2.25,2.25)
     \rput[bl](-0.9,2){\small 1}
     \rput[bl](2.4,2.25){\small 2}
     \rput[bl](1.0,1.3){\small 3}
     \rput[bl](0.85,4.2){\small 4}
     \rput[bl](0.85,-0.5){\small 5}

     \end{pspicture}
\end{center}

\begin{proof}
Let $I= I(\bar{\mathfrak{T}})$. We know that $\bar{\mathfrak{T}} = \{ 145, 245, 345, 123 \}$. So that
$$I^\vee = (x_1x_2x_3, x_4,x_5) \cap (x_1, x_2, x_3) \subset S:= K[x_1, \ldots, x_5].$$
It follows from Theorem~\ref{Mayer-Vietoris sequence} that $H_\mathfrak{m}^{1} \left( \frac{S}{I^\vee} \right) \neq 0$. Since $\dim S/I^\vee = 5-3 =2$, we conclude that $S/I^\vee$ is not Cohen-Macaulay. So that the ideal $I$ does not have linear resolution by Theorem~\ref{Eagon-Reiner}.\\
The second part of the theorem, is a direct conclusion of Theorem~\ref{one vertex}.
\end{proof}

Let ${\mathcal S}^2$ be a sphere in $\mathbb R^3$. A \textit{triangulation} of ${\mathcal S}^2$ is a finite simple graph embedded on $\mathcal{S}^2$ such that each face is triangular and any two faces share at most one edge. Note that if $\C$ is a triangulation of a surface, then $\C$ defines a $3$-uniform clutter which we denote this again by $\C$. Moreover, any proper subclutter $\C' \subset \C$ has an edge $e \in E(\C')$ such that $e$ is contained in only one circuit of $\C'$.

\begin{cor}\label{polytope}
Let $S=K[x_1,\ldots,x_n]$. Let  ${\mathfrak P}_n$ be the clutter defined by a triangulation of the sphere with  $n \geq 5$ vertices, and let $I\subset S$ be the circuit ideal of  $\overline{\mathfrak P}_n$. Then,
\begin{itemize}
\item[\rm (i)] For any proper subclutter  $\C_1 \subset {\mathfrak P}_n$, the ideal $I(\bar{\C}_1)$ has a linear resolution.
\item[\rm (ii)] $S/I$ does not have linear resolution.
\end{itemize}
\end{cor}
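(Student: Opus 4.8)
The plan is to handle the two parts by different mechanisms: part (i) by an inductive stripping of circuits using the simplicial-edge operation, and part (ii) by reading off a single Betti number of $I$ from the fundamental homology class of the sphere.

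For (i), I would show that every proper subclutter $\C_1 \subsetneq \mathfrak{P}_n$ lies in the class $\mathscr{C}$ of Remark~\ref{reducable to zero}, i.e. can be reduced to $\varnothing$ by successive deletions of simplicial edges. The engine is the combinatorial fact recorded just before the corollary: any proper subclutter of $\mathfrak{P}_n$ contains an edge $e$ lying in exactly one circuit $F$ (topologically, a proper subcomplex of the triangulated sphere has a "boundary edge"). Such an $e$ is automatically a simplicial edge, since $N[e]=F$ is a clique, so Corollary~\ref{one vertex} gives $\reg I(\bar{\C}_1)=\reg I(\overline{\C_1\setminus F})$. As $\C_1\setminus F$ is again a proper subclutter of $\mathfrak{P}_n$, the same fact reapplies, and iterating removes one circuit at a time down to the empty clutter. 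At every stage the clutter is $\subsetneq\mathfrak{P}_n\subsetneq\C_{n,3}$ (using $2n-4<\binom{n}{3}$ for $n\geq 5$), so the hypothesis $\C\neq\C_{n,d}$ of Theorem~\ref{simplicial edge} holds. Since $I(\overline{\varnothing})=I(\C_{n,3})$ has a $3$-linear resolution by Example~\ref{linearity of C_n,d}, we conclude $\reg I(\bar{\C}_1)=3$, which for an ideal generated in degree $3$ is equivalent to a $3$-linear resolution.

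For (ii), I would first identify $I=I(\overline{\mathfrak{P}_n})$ with the Stanley--Reisner ideal $I_\Delta$ of the complex $\Delta$ whose faces are the $G\subseteq[n]$ all of whose $3$-subsets are triangles of $\mathfrak{P}_n$. Since $I$ is generated in degree $3$, $\Delta$ has complete $1$-skeleton $K_n$; its $2$-faces are exactly the triangles of the triangulation; and for $n\geq 5$ it has no face of dimension $\geq 3$, because a $4$-clique would make the four triangles on those vertices a closed $2$-manifold embedded in $|\mathfrak{P}_n|\cong\mathcal S^2$, hence all of it, forcing $n=4$. Thus $|\Delta|$ is the sphere $|\mathfrak{P}_n|$ with some extra $1$-cells (the chords) attached along their endpoints, and attaching $1$-cells changes no homology in degree $\geq 2$; therefore $\tilde{H}_2(\Delta;K)\cong\tilde{H}_2(\mathcal S^2;K)=K\neq 0$.

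Finally I would feed this class into Hochster's formula (see \cite{BH}): as $W=[n]$ is the only subset of size $n$,
\[
\beta_{n-3,\,n}(S/I)=\dim_K \tilde{H}_{2}(\Delta;K)=1 ,
\]
so $\beta_{n-4,n}(I)\neq 0$ and $\reg(I)\geq n-(n-4)=4>3$; an ideal with a $3$-linear resolution has regularity $3$, so $I$ has none, proving (ii). The conceptual heart, and the only real obstacle, is the nonvanishing $\tilde{H}_2(\Delta)\neq 0$: it is the algebraic trace of the fact that the \emph{closed} sphere, unlike any of its proper subclutters (whose every edge sits in two triangles rather than one), carries a top cycle with no boundary to annihilate it. Once this is isolated, the identification of $\Delta$, the absence of $4$-cliques, and the invocation of Hochster's formula are all routine. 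An alternative route would reduce an arbitrary $\mathfrak{P}_n$ to the bipyramid of Lemma~\ref{tetrahedron} by regularity-preserving flips (Corollary~\ref{Flip}) and inverse stellar subdivisions (Theorem~\ref{karbordi}), but establishing that these moves connect all sphere triangulations is a Wagner-type combinatorial statement that is harder to justify than the homological computation above.
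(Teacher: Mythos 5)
Your part (i) is essentially the paper's own argument: strip one circuit at a time using the boundary--edge observation stated just before the corollary together with Corollary~\ref{one vertex}, ending at the empty clutter whose complement is $\C_{n,3}$, which is linear by Example~\ref{linearity of C_n,d}; your extra checks (that $\mathfrak{P}_n\subsetneq\C_{n,3}$ for $n\ge 5$, and that a boundary edge is simplicial) are correct and in fact more careful than the paper. For part (ii) you take a genuinely different route. The paper argues by induction on $n$: the base case $n=5$ is Lemma~\ref{tetrahedron}, and the inductive step removes a vertex of degree $3$ by Theorem~\ref{karbordi}, after first using a sequence of flips (Corollary~\ref{Flip}) to bring some vertex down to degree $3$. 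You instead identify $I$ with the Stanley--Reisner ideal of the clique complex $\Delta$ of $\mathfrak{P}_n$, note that $\Delta$ is $2$-dimensional for $n\ge 5$ (your no-$4$-clique argument is sound: the four triangles on a $4$-clique form a closed subsurface of $\mathcal{S}^2$, hence all of it, forcing $n=4$), conclude $\tilde H_2(\Delta;K)\neq 0$ from the fundamental class of the sphere, and read off $\beta_{n-3,n}(S/I)=\dim_K\tilde H_2(\Delta;K)=1$ from Hochster's formula, whence $\reg(I)\ge 4$. What your route buys: it is shorter, it exhibits the precise homological obstruction, and it sidesteps the one delicate point in the paper's induction, namely the unproved assertion that admissible flips (each requiring the new diagonal to lie in no circuit of the current clutter) always suffice to reduce a vertex of a sphere triangulation to degree $3$ --- a Wagner-type connectivity claim. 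What the paper's route buys: it stays entirely within the reduction calculus developed in the section and gives Theorem~\ref{karbordi} and Corollary~\ref{Flip} something to do. Both arguments are correct; yours is, as written, the more self-contained of the two.
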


\begin{proof}
(i) If $\C_1$ is a proper subclutter of ${\mathfrak P}_n$, then $\C_1$ has an edge $e$ such that $e$ is contained in only one circuit of $\C_1$ and can be deleted without changing the regularity by Corollary \ref{one vertex}. Continuing this process proves the assertion.

(ii) The proof is by induction on $n$, the number of vertices. First step of induction is Lemma~\ref{tetrahedron}. Let  $n>5$. If there is a vertex of degree 3 (the number of edges passing through the vertex in 3), then by Theorem~\ref{karbordi}, we can remove the vertex and three circuits containing it and add a new circuit instead. Then, we have a clutter with fewer vertices and by the induction hypothesis, $S/I$ does not have linear resolution. Now, assume that, there is no any vertex of degree 3 and take a vertex $u$ of degree $>3$ and all circuits containing $u$ (see the following illustrations). Using several flips and Corollary~\ref{Flip}, we can reduce our triangulation to another one such that there are only $3$ circuits containing $u$. Now, using Theorem~\ref{karbordi}, we get a triangulation of the sphere with $n-1$ vertices which does not have linear resolution by the induction hypothesis.
\begin{center}
 \psset{unit=0.3cm}
  \begin{pspicture}(-1,-1)(8,4)

     \psline(0,1)(1.5,0.25)
     \psline(1.5,0.25)(3,0)
     \psline(3,0)(5,0)
     \psline(5,0)(6.5,0.25)
     \psline(6.5,0.25)(8,1)
     \psline[linewidth=0.3pt,linestyle=dashed](8,1)(6.5,1.75)
     \psline[linewidth=0.3pt,linestyle=dashed](6.5,1.75)(5,2)
     \psline[linewidth=0.3pt,linestyle=dashed](5,2)(3,2)
     \psline[linewidth=0.3pt,linestyle=dashed](3,2)(1.5,1.75)
     \psline[linewidth=0.3pt,linestyle=dashed](1.5,1.75)(0,1)

     \psline(0,1)(4,4)
     \psline(1.5,0.25)(4,4)
     \psline(3,0)(4,4)
     \psline(5,0)(4,4)
     \psline(6.5,0.25)(4,4)
     \psline(8,1)(4,4)
     \psline[linewidth=0.3pt,linestyle=dashed](6.5,1.75)(4,4)
     \psline[linewidth=0.3pt,linestyle=dashed](5,2)(4,4)
     \psline[linewidth=0.3pt,linestyle=dashed](3,2)(4,4)
     \psline[linewidth=0.3pt,linestyle=dashed](1.5,1.75)(4,4)

     \psline(0,1)(-0.25,0)
     \psline(0,1)(0,-0.15)
     \psline(1.5,0.25)(1,-0.5)
     \psline(1.5,0.25)(1.7,-0.75)
     \psline(3,0)(2.6,-1)
     \psline(3,0)(3.3,-1)
     \psline(5,0)(4.6,-1)
     \psline(5,0)(5.3,-1)
     \psline(6.5,0.25)(6.3,-0.75)
     \psline(6.5,0.25)(7,-0.5)
     \psline(8,1)(8.25,0)
     \psline(8,1)(8,-0.15)
     \psline[linewidth=0.3pt,linestyle=dashed](8,1)(6.5,1.75)
     \psline[linewidth=0.3pt,linestyle=dashed](6.5,1.75)(5,2)
     \psline[linewidth=0.3pt,linestyle=dashed](5,2)(3,2)
     \psline[linewidth=0.3pt,linestyle=dashed](3,2)(1.5,1.75)
     \psline[linewidth=0.3pt,linestyle=dashed](1.5,1.75)(0,1)

     \psline[linewidth=0.3pt,linestyle=dashed](6.5,1.75)(6.25,1.25)
     \psline[linewidth=0.3pt,linestyle=dashed](5,2)(5.25,1.5)
     \psline[linewidth=0.3pt,linestyle=dashed](3,2)(3.25,1.5)
     \psline[linewidth=0.3pt,linestyle=dashed](1.5,1.75)(1.25,1.25)
     \psline[linewidth=0.3pt,linestyle=dashed](6.5,1.75)(6.75,1.25)
     \psline[linewidth=0.3pt,linestyle=dashed](5,2)(4.75,1.5)
     \psline[linewidth=0.3pt,linestyle=dashed](3,2)(2.75,1.5)
     \psline[linewidth=0.3pt,linestyle=dashed](1.5,1.75)(1.75,1.25)

     \psline(4,-3)(3,-2.75)
     \psline(4,-3)(3.5,-2.63)
     \psline(4,-3)(4,-2.5)
     \psline(4,-3)(4.5,-2.63)
     \psline(4,-3)(5,-2.75)

     \put(1,-1.5){\mbox{$\cdot$}}
     \put(7,-1.5){\mbox{$\cdot$}}
     \put(1.5,-2){\mbox{$\cdot$}}
     \put(6.5,-2){\mbox{$\cdot$}}
     \put(2,-2.3){\mbox{$\cdot$}}
     \put(6,-2.3){\mbox{$\cdot$}}

     \put(4,-4){\mbox{1}} 
     \end{pspicture}
\hspace*{0.5cm}
 \psset{unit=0.3cm}
  \begin{pspicture}(0,0)(8,4)

\pscustom{%
     \psline(0,1)(1.5,0.25)
     \gsave
     \psline(1.5,0.25)(4,4)
     \psline(0,1)(4,4)

     \fill[fillstyle=solid,fillcolor=lightgray]
\grestore}

\pscustom{%
     \psline(0,1)(4,4)
     \gsave
     \psline(0,1)(1.5,1.75)
     \psline(1.5,1.75)(4,4)
     \fill[fillstyle=solid,fillcolor=gray]
\grestore}

     \psline(0,1)(1.5,0.25)
     \psline(1.5,0.25)(3,0)
     \psline(3,0)(5,0)
     \psline(5,0)(6.5,0.25)
     \psline(6.5,0.25)(8,1)
     \psline[linewidth=0.3pt,linestyle=dashed](8,1)(6.5,1.75)
     \psline[linewidth=0.3pt,linestyle=dashed](6.5,1.75)(5,2)
     \psline[linewidth=0.3pt,linestyle=dashed](5,2)(3,2)
     \psline[linewidth=0.3pt,linestyle=dashed](3,2)(1.5,1.75)
     \psline[linewidth=0.3pt,linestyle=dashed](1.5,1.75)(0,1)

     \psline(0,1)(4,4)
     \psline(1.5,0.25)(4,4)
     \psline(3,0)(4,4)
     \psline(5,0)(4,4)
     \psline(6.5,0.25)(4,4)
     \psline(8,1)(4,4)
     \psline[linewidth=0.3pt,linestyle=dashed](6.5,1.75)(4,4)
     \psline[linewidth=0.3pt,linestyle=dashed](5,2)(4,4)
     \psline[linewidth=0.3pt,linestyle=dashed](3,2)(4,4)
     \psline[linewidth=0.3pt,linestyle=dashed](1.5,1.75)(4,4)

     \put(4,-1){\mbox{2}}
     \end{pspicture}
\hspace*{0.5cm}
 \psset{unit=0.3cm}
  \begin{pspicture}(0,0)(8,4)
\pscustom{%
     \psline(0,1)(1.5,0.25)
     \gsave
     \psline(1.5,1.75)(0,1)
     \psline(1.5,0.25)(1.5,1.75)
     \fill[fillstyle=solid,fillcolor=lightgray]
\grestore}
\pscustom{%
     \psline(1.5,0.25)(4,4)
     \gsave
     \psline(1.5,0.25)(1.5,1.75)
     \psline(1.5,1.75)(4,4)
     \fill[fillstyle=solid,fillcolor=lightgray]
\grestore}

     \psline(0,1)(1.5,0.25)
     \psline(1.5,0.25)(3,0)
     \psline(3,0)(5,0)
     \psline(5,0)(6.5,0.25)
     \psline(6.5,0.25)(8,1)
     \psline[linewidth=0.3pt,linestyle=dashed](8,1)(6.5,1.75)
     \psline[linewidth=0.3pt,linestyle=dashed](6.5,1.75)(5,2)
     \psline[linewidth=0.3pt,linestyle=dashed](5,2)(3,2)
     \psline[linewidth=0.3pt,linestyle=dashed](3,2)(1.5,1.75)
     \psline(1.5,1.75)(0,1)
     \psline(1.5,0.25)(4,4)
     \psline(1.5,0.25)(1.5,1.75)
     \psline(3,0)(4,4)
     \psline(5,0)(4,4)
     \psline(6.5,0.25)(4,4)
     \psline(8,1)(4,4)
     \psline[linewidth=0.3pt,linestyle=dashed](6.5,1.75)(4,4)
     \psline[linewidth=0.3pt,linestyle=dashed](5,2)(4,4)
     \psline[linewidth=0.3pt,linestyle=dashed](3,2)(4,4)
     \psline(1.5,1.75)(4,4)

     \put(4,-1){\mbox{3}}
     \end{pspicture}
\hspace*{0.5cm}
 \psset{unit=0.3cm}
  \begin{pspicture}(0,0)(8,4)
 \pscustom{%
     \psline(1.5,0.25)(4,4)
     \gsave
     \psline(1.5,0.25)(1.5,1.75)
     \psline(1.5,1.75)(4,4)
     \fill[fillstyle=solid,fillcolor=lightgray]
\grestore}
\pscustom{%
     \psline(1.5,0.25)(4,4)
     \gsave
     \psline(1.5,0.25)(3,0)
     \psline(3,0)(4,4)
     \fill[fillstyle=solid,fillcolor=lightgray]
\grestore}

     \psline(0,1)(1.5,0.25)
     \psline(1.5,0.25)(3,0)
     \psline(3,0)(5,0)
     \psline(5,0)(6.5,0.25)
     \psline(6.5,0.25)(8,1)
     \psline[linewidth=0.3pt,linestyle=dashed](8,1)(6.5,1.75)
     \psline[linewidth=0.3pt,linestyle=dashed](6.5,1.75)(5,2)
     \psline[linewidth=0.3pt,linestyle=dashed](5,2)(3,2)
     \psline[linewidth=0.3pt,linestyle=dashed](3,2)(1.5,1.75)
     \psline(1.5,1.75)(0,1)

     \psline(1.5,0.25)(4,4)
     \psline(1.5,0.25)(1.5,1.75)
     \psline(3,0)(4,4)
     \psline(5,0)(4,4)
     \psline(6.5,0.25)(4,4)
     \psline(8,1)(4,4)
     \psline[linewidth=0.3pt,linestyle=dashed](6.5,1.75)(4,4)
     \psline[linewidth=0.3pt,linestyle=dashed](5,2)(4,4)
     \psline[linewidth=0.3pt,linestyle=dashed](3,2)(4,4)
     \psline(1.5,1.75)(4,4)

     \put(4,-1){\mbox{4}}
     \end{pspicture}
 \end{center}

\vspace*{0.2cm}

\begin{center}
 \psset{unit=0.3cm}
  \begin{pspicture}(-1,-1)(8,4)
 \pscustom{%
     \psline(1.5,0.25)(3,0)
     \gsave
     \psline(3,0)(4,4)
     \psline(1.5,1.75)(4,4)
     \psline(1.5,1.75)(1.5,0.25)
     \fill[fillstyle=solid,fillcolor=lightgray]
\grestore}
     \psline(0,1)(1.5,0.25)
     \psline(1.5,0.25)(3,0)
     \psline(3,0)(5,0)
     \psline(5,0)(6.5,0.25)
     \psline(6.5,0.25)(8,1)
     \psline[linewidth=0.3pt,linestyle=dashed](8,1)(6.5,1.75)
     \psline[linewidth=0.3pt,linestyle=dashed](6.5,1.75)(5,2)
     \psline[linewidth=0.3pt,linestyle=dashed](5,2)(3,2)
     \psline[linewidth=0.3pt,linestyle=dashed](3,2)(1.5,1.75)
     \psline(1.5,1.75)(0,1)
     \psline(3,0)(1.5,1.75)
     \psline(1.5,0.25)(1.5,1.75)
     \psline(3,0)(4,4)
     \psline(5,0)(4,4)
     \psline(6.5,0.25)(4,4)
     \psline(8,1)(4,4)
     \psline[linewidth=0.3pt,linestyle=dashed](6.5,1.75)(4,4)
     \psline[linewidth=0.3pt,linestyle=dashed](5,2)(4,4)
     \psline[linewidth=0.3pt,linestyle=dashed](3,2)(4,4)
     \psline(1.5,1.75)(4,4)

     \put(4,-1){\mbox{5}}
     \end{pspicture}
 \psset{unit=0.5cm}
  \begin{pspicture}(0,0)(8,4)
     \psline[linestyle=dashed](3,1)(5,1)

     \end{pspicture}
 \psset{unit=0.3cm}
  \begin{pspicture}(0,0)(8,4)
\pscustom{%
     \psline(5,2)(1.5,1.75)
     \gsave
     \psline(1.5,1.75)(4,4)
     \psline(5,2)(4,4)

     \fill[fillstyle=solid,fillcolor=lightgray]
\grestore}
     \psline(0,1)(1.5,0.25)
     \psline(1.5,0.25)(3,0)
     \psline(3,0)(5,0)
     \psline(5,0)(6.5,0.25)
     \psline(6.5,0.25)(8,1)
     \psline(8,1)(6.5,1.75)
     \psline(6.5,1.75)(5,2)
     \psline[linewidth=0.3pt,linestyle=dashed](5,2)(3,2)
     \psline[linewidth=0.3pt,linestyle=dashed](3,2)(1.5,1.75)
     \psline(1.5,1.75)(0,1)
     \psline(3,0)(1.5,1.75)
     \psline(1.5,0.25)(1.5,1.75)
     \psline(5,0)(1.5,1.75)
     \psline(6.5,0.25)(1.5,1.75)
     \psline(8,1)(1.5,1.75)
     \psline(6.5,1.75)(1.5,1.75)
     \psline(5,2)(1.5,1.75)
     \psline(5,2)(4,4)
     \psline[linewidth=0.3pt,linestyle=dashed](3,2)(4,4)
     \psline(1.5,1.75)(4,4)

     \put(4,-1){\mbox{6}}
     \end{pspicture}
\hspace*{0.5cm}
 \psset{unit=0.3cm}
  \begin{pspicture}(0,0)(8,4)
     \psline(0,1)(1.5,0.25)
     \psline(1.5,0.25)(3,0)
     \psline(3,0)(5,0)
     \psline(5,0)(6.5,0.25)
     \psline(6.5,0.25)(8,1)
     \psline(8,1)(6.5,1.75)
     \psline(6.5,1.75)(5,2)
     \psline(5,2)(3,2)
     \psline(3,2)(1.5,1.75)
     \psline(1.5,1.75)(0,1)
     \psline[linewidth=0.3pt](3,0)(1.5,1.75)
     \psline[linewidth=0.5pt](1.5,0.25)(1.5,1.75)
     \psline[linewidth=0.3pt](5,0)(1.5,1.75)
     \psline[linewidth=0.3pt](6.5,0.25)(1.5,1.75)
     \psline[linewidth=0.3pt](8,1)(1.5,1.75)
     \psline[linewidth=0.3pt](6.5,1.75)(1.5,1.75)
     \psline[linewidth=0.3pt](5,2)(1.5,1.75)
     \psline(0,1)(-0.25,0)
     \psline(0,1)(0,-0.15)
     \psline(1.5,0.25)(1,-0.5)
     \psline(1.5,0.25)(1.7,-0.75)
     \psline(3,0)(2.6,-1)
     \psline(3,0)(3.3,-1)
     \psline(5,0)(4.6,-1)
     \psline(5,0)(5.3,-1)
     \psline(6.5,0.25)(6.3,-0.75)
     \psline(6.5,0.25)(7,-0.5)
     \psline(8,1)(8.25,0)
     \psline(8,1)(8,-0.15)
     \psline[linewidth=0.3pt,linestyle=dashed](6.5,1.75)(6.25,1.25)
     \psline[linewidth=0.3pt,linestyle=dashed](5,2)(5.25,1.5)
     \psline[linewidth=0.3pt,linestyle=dashed](3,2)(3.25,1.5)
     \psline[linewidth=0.3pt,linestyle=dashed](1.5,1.75)(1.25,1.25)
     \psline[linewidth=0.3pt,linestyle=dashed](6.5,1.75)(6.75,1.25)
     \psline[linewidth=0.3pt,linestyle=dashed](5,2)(4.75,1.5)
     \psline[linewidth=0.3pt,linestyle=dashed](3,2)(2.75,1.5)
     \psline[linewidth=0.3pt,linestyle=dashed](1.5,1.75)(1.75,1.25)

     \psline(4,-3)(3,-2.75)
     \psline(4,-3)(3.5,-2.63)
     \psline(4,-3)(4,-2.5)
     \psline(4,-3)(4.5,-2.63)
     \psline(4,-3)(5,-2.75)

     \put(1,-1.5){\mbox{$\cdot$}}
     \put(7,-1.5){\mbox{$\cdot$}}
     \put(1.5,-2){\mbox{$\cdot$}}
     \put(6.5,-2){\mbox{$\cdot$}}
     \put(2,-2.3){\mbox{$\cdot$}}
     \put(6,-2.3){\mbox{$\cdot$}}

     \put(4,-4){\mbox{7}} 
     \end{pspicture}
 \end{center}

\vspace{0.5cm}

\end{proof}

\begin{rem}\rm
Let ${\mathfrak P}_n$ be the 3-uniform clutter as in Corollary~\ref{polytope}. Let $I$ be the circuit ideal of  $\bar{\mathfrak P}_n$ and $\Delta$ be a cimplicial complex such that the Stanley-Reisner ideal of $\Delta$ is $I$. In this case, $\Delta^{\vee}$, the Alexander dual of $\Delta$, is a pure simplicial complex of dimension $n-4$ which is not Cohen-Maculay, but adding any more facet to $\Delta^{\vee}$ makes it Cohen-Macaulay.
\end{rem}




\end{document}